\title{Wasserstein Control of Mirror Langevin Monte Carlo}
\author[1]{Kelvin Shuangjian Zhang$^\dagger$}\thanks{$^\dagger$ CNRS and D\'epartement de Math\'ematiques et Applications, \'Ecole Normale Sup\'erieure / Universit\'e PSL, Paris, France {\tt szhang@ens.fr}}
\author[2]{Gabriel Peyr\'e$^\ddagger$}\thanks{$^\ddagger$ CNRS and D\'epartement de Math\'ematiques et Applications, \'Ecole Normale Sup\'erieure / Universit\'e PSL, Paris, France {\tt gabriel.peyre@ens.fr}}
\author[3]{Jalal Fadili$^\mathsection$}\thanks{$^\mathsection$ Normandie Univ, ENSICAEN, UNICAEN, CNRS, GREYC, France {\tt jalal.fadili@greyc.ensicaen.fr}}
\author[4]{Marcelo Pereyra$^\mathparagraph$}\thanks{$^\mathparagraph$ School of Mathematical and Computer Sciences, Heriot-Watt University, UK  {\tt m.pereyra@hw.ac.uk}}
\begin{document}


\begin{abstract}
Discretized Langevin diffusions are efficient Monte Carlo methods for sampling from high dimensional target densities that are log-Lipschitz-smooth and (strongly) log-concave. In particular, the Euclidean Langevin Monte Carlo sampling algorithm has received much attention lately, leading to a detailed understanding of its non-asymptotic convergence properties and of the role that smoothness and log-concavity play in the convergence rate. Distributions that do not possess these regularity properties can be addressed by considering a Riemannian Langevin diffusion with a metric capturing the local geometry of the log-density. However, the Monte Carlo algorithms derived from discretizations of such Riemannian Langevin diffusions are notoriously difficult to analyze. In this paper, we consider Langevin diffusions on a Hessian-type manifold and study a discretization that is closely related to the mirror-descent scheme. We establish for the first time a non-asymptotic upper-bound on the sampling error of the resulting Hessian Riemannian Langevin Monte Carlo algorithm. This bound is measured according to a Wasserstein distance induced by a Riemannian metric ground cost capturing the Hessian structure and closely related to a self-concordance-like condition. The upper-bound implies, for instance, that the iterates contract toward a Wasserstein ball around the target density whose radius is made explicit. Our theory recovers existing Euclidean results and can cope with a wide variety of Hessian metrics related to highly non-flat geometries. 
\end{abstract}

\maketitle
\vspace{-0.5cm}
\noindent \textbf{Keywords.} {\small \textit{Riemannian Langevin Monte Carlo, Hessian manifold, sampling, contraction, Baillon-Haddad inequality.}}
\vspace{0.5cm}



\section{Introduction}

\subsection{Problem and setting}
We consider the problem of sampling from a target probability distribution $d\pi = e^{-f(\x)} d\x$ supported on a domain $\Xx \subset \R^p$, where $f$ is differentiable on $\Xx$. We are particularly interested in sampling algorithms that scale efficiently to high dimensions. When $f$ is Lipschitz-smooth (i.e. differentiable with Lipschitz gradient) and strongly convex on $\Xx$, then the conventional Langevin Monte Carlo (LMC) algorithm derived from an Euler-Maruyama discretization of the Langevin stochastic differential equation (SDE) is one of the most computationally efficient methods to sample from $\pi$. In this paper, we endow $\Xx$ with a carefully designed Riemannian structure and study the non-asymptotic convergence properties of a Riemannian generalization of the LMC algorithm. The motivation is that by endowing $\Xx$ with an appropriate Riemannian geometry, it is possible to obtain algorithms with better convergence properties, and which can tackle distributions that are beyond the scope of the Euclidean LMC algorithm. We consider Riemannian structures of Hessian type~\citep{Shima07}; the corresponding metric is induced by the Hessian $D^2 \phi(\x)$ of some $C^2(\Xx)$ \emph{Legendre-type} convex potential/entropy $\phi$ on $\Xx$ (see \cite[Chapter~26]{Rockafellar70} for a comprehensive account on Legendre functions).

\medskip
\noindent\textbf{Discrete scheme.} In the same vein as in \cite{HsiehCevher18}, we consider a sampling analogue of mirror-descent as an extension of the classical Euler-Maruyama discretization of the Langevin SDE, which reads, starting from some random vector $\X_0$ on $\Xx$, 
\begin{flalign}\label{discrete_X}
\begin{aligned}
\X_{k+1} \eqdef \nabla\phi^*\Big(&\nabla\phi(\X_{k})-h_{k+1}\nabla f(\X_{k}) + \sqrt{2h_{k+1}[D^2\phi(\X_{k})]}{\bm \xi}_{k+1}\Big).
\end{aligned}
\end{flalign}
Here $\phi^*$ is the Legendre-Fenchel conjugate of $\phi$, i.e., $\phi^*({\bf y}) \eqdef \sup_{{\bf x} \in \Xx} \langle {\bf x}, {\bf y} \rangle - \phi({\bf x})$, $\{h_k\}_{k \in \N} \subset \RR_{++}$ is the sequence of step-sizes, and $\{{\bm \xi}_{k}\}_{k\in\N}$ is a sequence of standard normal random vectors that are mutually independent and independent of $\X_{0}$, which is either deterministic or random. Let us recall the useful fact that $\phi$ is of Legendre type if and only if its conjugate $\phi^*$ is of Legendre type. Moreover, the gradient $\nabla \phi$ of $\phi$ is a bijection from $\Int~\dom(\phi)=\Xx$ to $\Int~\dom(\phi^*)=\mathcal{Y}$ and its inverse obeys $(\nabla \phi)^{-1}=\nabla \phi^*$, see \cite[Theorem~26.5]{Rockafellar70}. Thus \eqref{discrete_X} makes perfectly sense as a single-valued mapping from $\Xx$ to $\Xx$.

In the following, we call iteration \eqref{discrete_X} {\bf Hessian Riemannian Langevin Monte Carlo (HRLMC)} algorithm. Note that~\cite{HsiehCevher18} does not study this method, and rather settles for a different discretization, which is simpler to analyze (being a change of variable applied to the Euclidean case) and enjoys theoretical guarantees that are markedly different from ours (we refer to Section~\ref{subsec:prior} for a detailed comparison). 

In the case where ${\bm \xi}_{k}=0$ (optimization framework), one recovers the mirror descent minimization algorithm~\citep{NemirovskiYudin83,BauschkeBolteTeboulle17,LuFreundNesterov17}. The classical Euclidean case is recovered when $\phi$ is the energy, i.e., $\phi(\x)=\norm{\x}^2_2/2$. Other popular options to sample in $\Xx=\RR_{++}^p$ include Shannon entropy $\phi({\x})=\sum_i x_i \log(x_i)$ and Burg's entropy $\phi({\x})=-\sum_i \log(x_i)$. 

As mentioned previously, the key motivations behind switching from Euclidean LMC methods to the HRLMC scheme are that by choosing an entropy $\phi$ adapted to $f$, one can either obtain better smoothness and strong convexity properties or even recover smoothness and strong convexity relatively to $\phi$ in cases where $f$ is neither Lipschitz-smooth nor strongly convex in the standard Euclidean geometry. The goal of this paper is to provide the first step toward a theoretical understanding of these phenomena, by establishing a non-asymptotic upper-bound on the error in a properly designed Wasserstein distance for sampling from $\pi$ using HRLMC. The terms in the bound explicitly reflect the interleaved geometries of $f$ and $\phi$.

\medskip
\noindent\textbf{Continuous flow.}
It can be shown that the HRLMC algorithm \eqref{discrete_X} can be viewed as a discretization of a Riemannian SDE. Denoting $\Y_t \eqdef \nabla\phi(\X_t)$, this SDE reads
\begin{flalign}\label{SDE_Y}
d\Y_t = -\nabla f \circ \nabla \phi^*(\Y_t) dt + \sqrt{2 [D^2 \phi^*(\Y_t)]^{-1}} d{\bf B}_t,
\end{flalign}
where $\{{\bf B}_t \}_{t\ge 0}$ is a standard $p$-dimensional Brownian motion. If moreover $\phi \in C^3(\Xx)$, then Legendreness of $\phi$ entails that the SDE on $\X_t$ reads
\begin{flalign}
\label{SDE_X_1}
	d\X_t = \big(  \theta({\X}_t) -[D^2\phi(\X_t)]^{-1} \nabla f(\X_t)\big)dt 
+ \sqrt{2[D^2\phi(\X_t)]^{-1}} d{\bf B}_t,
\end{flalign}
where the additional drift term $\theta(\X_t) \eqdef - [D^2\phi(\X_t)]^{-1}{\rm Tr}\left( D^3\phi(\X_t) [D^2\phi(\X_t)]^{-1}\right)$. Moreover, the corresponding density satisfies a Fokker-Planck equation that has $\pi$ as its stationary solution (we omit the details for the sake of brevity). When $\phi(\x)=\norm{\x}^2_2/2$, then $\X_t=\Y_t$, and \eqref{SDE_Y} and \eqref{SDE_X_1} coincide with the standard Langevin diffusion. The SDE \eqref{SDE_X_1}, viewed as Brownian motion on a Hessian manifold corrected by a Riemannian drift term $-[D^2\phi(\X_t)]^{-1} \nabla f(\X_t)dt$, is then its natural generalization to a Riemannian manifold with a Hessian structure. We will show in Appendix~\ref{app:sufficient_condition} that both \eqref{SDE_Y} and \eqref{SDE_X_1} are well-posed, under a self-concordance-like condition \eqref{assumption:A1}.

\subsection{Previous work}\label{subsec:prior}

The goal of this paper is to provide non-asymptotic upper-bounds on the Wasserstein distance, with an appropriate ground cost, between the distribution $\mu_k$ of ${\bf X}_k$ and the target distribution $\pi$.

\medskip
\noindent\textbf{Langevin Monte Carlo (LMC) under (strong) log-concavity.} 
The Euclidean LMC, corresponding to $\phi(\x)=\norm{\x}^2_2/2$, has been extensively studied in the literature, where non-asymptotic error bounds have been established under various sampling error metrics (Kullback-Leibler, Total-Variation, or Wasserstein). The case where $f$ is $m$-strongly convex with a $M$-Lipschitz gradient is the one that has been most widely studied~\citep{Dalalyan17a,Dalalyan17b,DurmusMoulines17,ChengBartlett18,DurmusMoulines19,DalalyanKaragulyan19,DurmusMajewskiMiasojedow19,DwivediEtAlYu19}. In particular,~\citep{DalalyanKaragulyan19} have shown that, when using a constant step size $h_k = h \in (0, \frac{2}{M})$, the LMC algorithm converges to the sampling distribution with a contraction factor $\rho = \max(1-mh, Mh-1)$. More precisely,
\begin{flalign}\label{eq-convergence-W2}
\begin{aligned}
W_2(\mu_k, \pi) &\le \rho^k W_2 (\mu_0, \pi) + \frac{1.65 Mh^{\frac{3}{2}}p^{\frac{1}{2}}}{1-\rho} \\
		&\le (1-mh)^k W_2 (\mu_0, \pi) + 1.65 (M/m)(ph)^{\frac{1}{2}}, \quad \text{if } h \leq 2/(m+M) ,
\end{aligned}
\end{flalign}
where  $W_2$ is the 2-Wasserstein distance between two probability measures, i.e.,
\begin{flalign*}
	W_2^2(\mu, \nu) \eqdef \inf\limits_{\X \sim \mu, \X' \sim \nu} \E\left[\norm{\X-\X'}_2^2\right].
\end{flalign*}
This is the best known result in Wasserstein distance.

\cite{DurmusMoulinesPereyra18} studied the case of non-Lipschitz-smooth (strongly) convex $f$ via Moreau-Yosida regularization, and \cite{BubeckEldanLehec18,BrosseEtAlPereyra17} the case of log-Lipschitz-smooth strongly log-concave densities supported on a convex compact set. \cite{ChengEtAlJordan18,DalalyanKineticLMC18} investigated the case of a kinetic Langevin diffusion (i.e., underdamped LMC) for the same class of densities, showing that it leads to improved dependence on the dimension and error.

Non-asymptotic sampling error bounds when $f$ is Lipschitz-smooth and merely convex (but not strongly so) have been established in the literature in KL and TV~\cite{DurmusMajewskiMiasojedow19}, and in Wasserstein distance~\cite{DalalyanConvexLMC19} for various discrete LMC schemes.

\medskip
\noindent\textbf{LMC beyond log-concavity.}
Obtaining convergence results is very difficult when $f$ is not convex. \cite{LuuFadiliChesneau17} considered densities that are neither necessarily smooth nor log-concave and provided asymptotic consistency guarantees. Assuming convexity at infinity, \cite{ChengEtAlJordan19, Majka18} obtained convergence results in the 1-Wasserstein distance by using results in \cite{Eberle16}. When replacing convexity with a dissipativity condition, a non-asymptotic bound was first provided by \cite{Raginsky17} in the 2-Wasserstein distance, and then improved by \cite{Chau19}. In \cite{Zhang19}, assumptions are further weakened by assuming only local Lipschitz continuity of $\nabla f$ and by relaxing conditions of convexity at infinity and uniform dissipativity.

\medskip
\noindent\textbf{Continuous Riemannian Langevin dynamics.}
The SDE~\eqref{SDE_X_1} is a special case of the so-called Riemannian Langevin dynamics, which appeared in \cite{RobertsStramer02, GirolamiCalderhead11, PattersonTeh13}, when considering $\Xx$ as a Riemannian manifold with Hessian metric $D^2\phi$.
For this Riemannian Langevin SDE setting, it is known since \cite{Kent78} that $\X_t$ has $\pi$ as its unique invariant measure as long as $\X_t$ is non-explosive. For the conditions on the non-explosion of diffusions, see \cite{StroockVaradhan79}. Moreover, the linear convergence theory of the corresponding Fokker-Planck equation is known since  \cite{ArnoldMarkowichToscaniUnterreiter01}, relying on the positivity of Bakry-Emery tensor; see \citep{Bakry14} for a comprehensive account.
Discretization schemes of the Riemannian Langevin SDE~\eqref{SDE_X_1} were proposed in \cite{RobertsStramer02, GirolamiCalderhead11, PattersonTeh13}. For instance, \cite{RobertsStramer02} provided a linear convergence result of the Ozaki discretization under quite stringent conditions. In particular, for the Hessian manifold, this theory requires $\phi$ to be strongly convex, which in turn restricts the sampling distribution to be strongly log-concave.

In this paper, instead, we take the Euler-Maruyama discretization of~\eqref{SDE_Y} and map the process back to $\X_k$ by the mirror map $\X_k =\nabla \phi^*(\Y_k)$. This is a key difference between our HRLMC algorithm~\eqref{discrete_X} and those proposed in \cite{RobertsStramer02,GirolamiCalderhead11,PattersonTeh13}. To the best of our knowledge, there is no proof of convergence or error bounds for such Euler-Maruyama discretization of~\eqref{SDE_Y} or~\eqref{SDE_X_1}.

\medskip
\noindent\textbf{Relation to \cite{HsiehCevher18}.} 
In 2018, \cite{HsiehCevher18} studied a mirror-type discretization of Langevin dynamics.
Though it seems that their work shares apparent similarities with ours at first glance, both their scheme and results are, however, markedly different from our HRLMC. More precisely, a key difference lies in the fact that here, we use an appropriate diffusion term entailing a Gaussian noise in the discrete scheme with iteration-dependent covariances that account for the Hessian Riemannian structure. In contrast, \cite{HsiehCevher18} adopted a standard Gaussian noise instead. Moreover, they provided the existence of good mirror maps assuming $f$ is strongly convex and gave convergence of their sampling algorithm under $1$-strongly convex mirror maps.
In this paper, we relax these requirements to relative versions and aim to generalize  results from the literature relying on strong convexity and Lipschitz-smoothness of $f$.

\subsection{Contributions}

In this paper, by relaxing strong convexity and Lipschitz-smoothness of $f$ to the relative versions with respect to a Legendre-type entropy $\phi$, we prove that, if the step-sizes $h_k$ are chosen sensibly, the law of discrete process~\eqref{discrete_X} contracts into a Wasserstein ball centered at the desired invariant distribution, whose radius is given explicitly. This Wasserstein distance relies on a ground cost, which is a Riemannian distance that captures the Hessian structure of the manifold. In fact, convergence to $\pi$ is not achieved in general unless $\phi$ is quadratic, but our bound allows us to isolate a bias term that depends on the interleaved geometries of $f$ and $\phi$. In particular, our method recovers the state-of-the-art non-asymptotic sampling error bounds in Wasserstein distance when $\phi(\x)=\norm{\x}^2_2/2$ \citep{DalalyanKaragulyan19}. 

Section~\ref{section:contraction} states the main contribution of this paper, Proposition~\ref{cor:constant_step_size}, whose proof relies on a more general result (Theorem \ref{thm:contractibility}) detailed in Section~\ref{section:sketch_proof}. In the appendices, we collect all details of the discussions and proofs. This includes discussions of our assumptions (e.g., intuition behind condition \eqref{assumption:A1}, relation of~\eqref{assumption:A3} and \eqref{assumption:A4} to relative strong convexity and relative smoothness). We also present a generalized Baillon-Haddad inequality~\eqref{eqn:Baillon-Haddad_extend} that is of independent interest, and give the detailed proofs of Proposition~\ref{cor:constant_step_size}, Corollary~\ref{cor:contraction_ball}, and Proposition~\ref{prop:main_proof_phi}.

\medskip
\noindent\textbf{Notations.}
Thought out the paper, $\mathcal{M}_{k \times l}$ is the ring of $k \times l$ matrices on $\RR$. $\norm{{\bf v}}_2$ is the Euclidean norm of a vector ${\bf v}$; for a matrix ${\bf M} \in \mathcal{M}_{k \times l}$, $\norm{{\bf M}}_2$ stands for its spectral norm. That is, $\norm{{\bf M}}_2 = \sqrt{\lambda_{\max}({\bf M}^T{\bf M})}$, where $\lambda_{\max}$ represents the largest value of eigenvalues. 
By definition, $\norm{{\bf M}}_2\le \delta$ is equivalent to ${\bf M}^T{\bf M}\preceq \delta^2 {\bf I}_{p}$, i.e., ${\bf M}^T{\bf M} - \delta^2 {\bf I}_{p}$ is negative semi-definite. Another matrix norm we use here is the Frobenius norm $\norm{{\bf M}}_F = \sqrt{\sum_{i,j = 1} {\bf M}_{ij}^2} = \sqrt{{\rm Tr}({\bf M}^T{\bf M})}$, where ${\rm Tr}$ is the trace operator.
The commutator of two square matrices ${\bf M}_1, {\bf M}_2 \in \mathcal{M}_{p\times p}$ is denoted as $[{\bf M}_1, {\bf M}_2] \eqdef {\bf M}_1{\bf M}_2 - {\bf M}_2 {\bf M}_1$.

\bigskip
\section{Main contributions}\label{section:contraction}

In this section, we state our main contributions, namely that the HRLMC algorithm~\eqref{discrete_X} contracts into a Wasserstein ball centered at the invariant measure.

\subsection{Assumptions on $\phi$ and $f$} 

In the following, we assume that the domain $\Xx\subset \R^p$ is open, contractible and $\nabla \left(\frac{d\pi}{d \x}\right) = 0$ on its boundary $\partial \Xx$. 
To avoid technical issues, we assume that both $f$ and $\phi$ are in $C^{3}(\Xx)$ and $\phi$ is of Legendre type.

\medskip\noindent\textbf{Self-concordance-like condition on $\phi$.}
Our first condition imposes the existence of $\kappa\ge 0$ such that 
\begin{equation}\tag{\text{\bf A1}} \label{assumption:A1}
\forall (\x, \x') \in \Xx^2, \quad
\sqrt{2}\norm{D^2 \phi(\x)^{\frac{1}{2}} - D^2 \phi(\x')^{\frac{1}{2}}}_F  \le 
\kappa \norm{\nabla \phi(\x) - \nabla \phi(\x')}_2.
\end{equation}

In 1D, it is easy to check that this condition is equivalent to self-concordance. The general case is more intricate. \eqref{assumption:A1} is important to guarantee the existence and uniqueness of the strong solution of continuous dynamics~\eqref{SDE_Y} (see \citep[Theorem~5.2.1]{Oksendal03}). In fact, if it is violated, the Lipschitz condition of the SDE also fails, which removes the general theoretical guarantee for~\eqref{SDE_Y} to have an unique solution. See Appendix~\ref{app:sufficient_condition} for further details.

\medskip\noindent\textbf{Moment condition on the Hessian of $\phi$.}
The second constant involved in our analysis is
\begin{equation}\tag{\text{\bf A2}}\label{assumption:A2}
	R \eqdef \E_{\X \sim \pi} \left[\norm{D^2\phi(\X)}_2\right] = \int_\Xx \norm{D^2\phi(\x)}_2 e^{-f(\x)} \text{d} \x < + \infty.
\end{equation}

\medskip\noindent\textbf{Relative strong convexity and Lipschitz-smoothness.}
In this paper, we relax the usual Euclidean condition of strong convexity and Lipschitz-smoothness as follows: there exists $m \ge 0$, $M > 0$ such that $\forall (\x,\x') \in \Xx^2$, 
\begin{align}\tag{\text{\bf A3}}\label{assumption:A3}
	m\norm{\nabla\phi(\x)-\nabla\phi(\x')}_2^2 &\le 
		\langle \nabla f(\x) - \nabla f(\x'), \nabla\phi(\x) -\nabla\phi(\x') \rangle; \\
	\tag{\text{\bf A4}}
\label{assumption:A4}	\norm{\nabla f(\x) -\nabla f (\x')}_2 &\le M \norm{\nabla\phi(\x) - \nabla\phi(\x')}_2.
\end{align}
In the Euclidean case when $\phi(\x)=\norm{\x}^2/2$, one recovers the usual notion of strong convexity of $f$ and Lipschitz continuity of its gradient. 
The condition~\eqref{assumption:A3} and~\eqref{assumption:A4} imply, respectively, the relative strong convexity and relative Lipschitz-smoothness defined in \cite{LuFreundNesterov17,BauschkeBolteTeboulle17}. More precisely, they imply that
$		m D^2 \phi(\x) \preceq D^2 f(\x) \preceq M D^2 \phi(\x), 
\text{ for all } \x\in \Xx.$
The converse is not true in general. See details in Appendix~\ref{app:assumptions}.

\medskip\noindent\textbf{Bound on the commutator of $D^2\phi$ and $D^2 f$.} Whenever the Hessians $D^2f$ and $D^2\phi$ do not commute, we require the following assumption to quantify the commutator:
\begin{equation}\tag{\text{\bf A5}}\label{assumption:A5}
	\exists \delta \ge 0, \;
	\forall \x \in \Xx, \quad  
	\norm{ \left[(D^2\phi(\x))^{-1}, D^2 f(\x)\right]}_2 \le \delta.
\end{equation}
This control is crucial to prove the generalized Baillon-Haddad inequality (Proposition \ref{prop:Baillon-Haddad_extend_weak}).

\subsection{Wasserstein Distance}

While the de-facto geodesic distance on $\Xx$ endowed with the Hessian structure is the Riemannian distance associated with $D^2 \phi(\x)$, this distance cannot be computed in closed form. We thus settle for a simpler one, which is the Riemannian distance $d$ associated with the squared Hessian $[D^2 \phi(\x)]^2$. One can check that the diffeomorphism $\nabla \phi: (\Xx, [D^2 \phi(\x)]^2) \rightarrow (\mathcal{Y}, \mathbf{I}_p)$ is an isometry (see \citep[Chapter~1]{doCarmo92} for a detailed account on the isometry of Riemannian manifolds). Therefore,  $d(\x, \x') = \norm{\nabla\phi(\x)-\nabla\phi(\x')}_2$  for any $\x, \x'\in \Xx$.

With this ground distance, the natural associated geometric distance on the space of probability distributions on $\Xx$ is the Wasserstein distance
\begin{flalign}\label{def:Wasserstein_distance}
W_{2,\phi}^2(\mu, \nu) \eqdef \inf_{\x\sim \mu, \x'\sim \nu} \E\left[d^2(\x, \x')\right] 
= \inf_{\x\sim \mu, \x'\sim \nu} \E\left[\norm{\nabla\phi(\x)-\nabla\phi(\x')}_2^2\right].
\end{flalign}
When $\phi(\x)=\norm{\x}^2/2$, one recovers the usual $W_2$ distance used in~\eqref{eq-convergence-W2}.

\subsection{Statement of the main result}

From now on, we assume that conditions~\eqref{assumption:A1}--\eqref{assumption:A5} are satisfied. Denote by $\mu_k$ the law of the random vector $\X_{k}$ defined in~\eqref{discrete_X} and define
\begin{equation*}
\tilde{ \kappa} \eqdef \sqrt{\kappa^2 + \frac{\delta(4M+\delta)}{2(m+M)}}.
\end{equation*}
Our main contribution is Theorem~\ref{thm:contractibility}, whose statement and proof will be given shortly in a forthcoming section.
For the sake of clarity, we first apply it below to the case of constant step sizes, which makes it easier to get the gist of our main result and compare it with existing works.

\begin{proposition}[Constant step size]\label{cor:constant_step_size}
	Assume conditions~\eqref{assumption:A1}--\eqref{assumption:A5} are satisfied with $\tilde{ \kappa}	< \sqrt{2m}$ and $h_k = h  < \min\left(\frac{2m-\tilde{ \kappa}^2}{m^2},\frac{2M-\tilde{ \kappa}^2}{M^2}\right)$.
	Then 
	\begin{flalign}
	\begin{aligned}
	W_{2,\phi}(\mu_{k}, \pi) \le  \rho^k W_{2,\phi}(\mu_{0}, \pi) + h^{\frac{3}{2}}p^{\frac{1}{2}}(1-\rho)^{-1}\beta_2(R, M, \kappa) + hp^{\frac{1}{2}}(1-\rho)^{-1}\beta_1(R, \kappa),
	\end{aligned}
	\end{flalign}	
	where $\rho \eqdef \max\left(\sqrt{(1-mh)^2 + h \tilde{ \kappa}^2}, \sqrt{(1-Mh)^2 + h\tilde{ \kappa}^2}\right)<1$,
	$\beta_1(R, \kappa) \eqdef \kappa R^{\frac{1}{2}}$, and $\beta_2(R, M, \kappa) \eqdef M^{\frac{1}{2}} R^{\frac{1}{2}}\left(\frac{7\sqrt{2M}}{6} +  \frac{\kappa}{\sqrt{3}} \right)$ are dimension-free constants.
\end{proposition}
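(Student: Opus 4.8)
\textbf{Proof plan for Proposition~\ref{cor:constant_step_size}.}

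The plan is to derive this as a corollary of the more general Theorem~\ref{thm:contractibility}, which I expect gives a one-step recursion of the form
\begin{flalign*}
W_{2,\phi}(\mu_{k+1},\pi) \le \rho_{k+1} W_{2,\phi}(\mu_k,\pi) + E_{k+1},
\end{flalign*}
where $\rho_{k+1}$ is a per-step contraction factor depending on $h_{k+1}$, $m$, $M$ and $\tilde\kappa$, and $E_{k+1}$ is a one-step discretization/bias error that scales like $h_{k+1}^{3/2}p^{1/2}\beta_2 + h_{k+1}p^{1/2}\beta_1$. First I would specialize this recursion to the constant step size $h_k \equiv h$, so that $\rho_{k+1} \equiv \rho$ and $E_{k+1} \equiv E$ are constant in $k$; then unrolling the recursion geometrically gives $W_{2,\phi}(\mu_k,\pi) \le \rho^k W_{2,\phi}(\mu_0,\pi) + E\sum_{j=0}^{k-1}\rho^j \le \rho^k W_{2,\phi}(\mu_0,\pi) + E/(1-\rho)$, which is exactly the claimed bound with $E = h^{3/2}p^{1/2}\beta_2(R,M,\kappa) + hp^{1/2}\beta_1(R,\kappa)$.

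The substantive steps are then to verify that, under the stated hypotheses, (i) the per-step factor coming from Theorem~\ref{thm:contractibility} is indeed $\rho = \max\bigl(\sqrt{(1-mh)^2 + h\tilde\kappa^2},\ \sqrt{(1-Mh)^2 + h\tilde\kappa^2}\bigr)$, and (ii) that $\rho < 1$. For (ii): squaring, $\rho^2 < 1$ amounts to $(1-ah)^2 + h\tilde\kappa^2 < 1$ for $a \in \{m,M\}$, i.e. $a^2 h^2 - 2ah + h\tilde\kappa^2 < 0$, i.e. $h(a^2 h - 2a + \tilde\kappa^2) < 0$, i.e. $h < (2a - \tilde\kappa^2)/a^2$. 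Since $\tilde\kappa^2 < 2m \le 2M$ ensures both numerators $2m-\tilde\kappa^2$ and $2M-\tilde\kappa^2$ are positive, the hypothesis $h < \min\bigl(\tfrac{2m-\tilde\kappa^2}{m^2}, \tfrac{2M-\tilde\kappa^2}{M^2}\bigr)$ gives exactly $\rho < 1$ for both branches. I would also record here why the assumption $\tilde\kappa < \sqrt{2m}$ is precisely what makes the admissible step-size interval nonempty, and note that the definition of $\tilde\kappa$ absorbs both the self-concordance constant $\kappa$ from \eqref{assumption:A1} and the commutator defect $\delta$ from \eqref{assumption:A5} via the term $\delta(4M+\delta)/(2(m+M))$ — this is the point where the generalized Baillon--Haddad inequality enters through Theorem~\ref{thm:contractibility}.

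The main obstacle is not in this corollary but in ensuring the bookkeeping matches: I need the error term in Theorem~\ref{thm:contractibility} to already be packaged with the constants $\beta_1(R,\kappa) = \kappa R^{1/2}$ and $\beta_2(R,M,\kappa) = M^{1/2}R^{1/2}\bigl(\tfrac{7\sqrt{2M}}{6} + \tfrac{\kappa}{\sqrt 3}\bigr)$, where $R$ is the moment quantity from \eqref{assumption:A2}; assuming Theorem~\ref{thm:contractibility} states the one-step bound in these exact terms, the constant-step-size reduction is then just the geometric-series argument above, together with the elementary verification that $h$ in the stated range keeps $\rho \in (0,1)$ so that $\sum_{j\ge 0}\rho^j = (1-\rho)^{-1}$ converges. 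Finally I would remark that when $\phi(\x)=\norm{\x}^2_2/2$ one has $\kappa = 0$, $\delta = 0$, hence $\tilde\kappa = 0$, $\beta_1 = 0$, $W_{2,\phi} = W_2$, and the bound collapses to $W_2(\mu_k,\pi) \le (1-mh)^k W_2(\mu_0,\pi) + \tfrac{7\sqrt{2}}{6}(M/m)(Mph^3)^{1/2}\cdot h^{-1}\cdot\ldots$, recovering \eqref{eq-convergence-W2} up to the explicit constant, which serves as a useful sanity check on the normalization.
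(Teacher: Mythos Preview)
Your proposal is correct and follows essentially the same route as the paper: specialize the one-step recursion of Theorem~\ref{thm:contractibility} to a constant step size and unroll it as a geometric series, using $0<\rho<1$ to bound $\sum_{j=0}^{k-1}\rho^j$ by $(1-\rho)^{-1}$. Your explicit verification that the step-size hypothesis forces $\rho<1$ is a welcome addition, since the paper merely asserts this.
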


The error upper-bound is composed of three terms. The first one comes from the time finiteness that decreases exponentially, while the second corresponds to the discretization error. These two terms are standard in LMC. The last term is new and reveals the price to be paid if one trades the standard strong convexity and Lipschitz-smoothness for their relative versions in the Riemannian geometry induced by $\phi$. If $h$ is sufficiently small, one can see that $(1-\rho)^{-1} = \mathcal{O}(h^{-1})$, where the constant in the order depends on $(m,M,\kappa,\delta)$. In turn, the discretization error term will scale as $\mathcal{O}(\beta_2(R, M, \kappa)p^{1/2}h^{1/2})$, which vanishes as $h \to 0$, while the last term is $\mathcal{O}(\beta_1(R, \kappa)p^{1/2})$. The latter is a bias term. 
Though we have no proof so far, we conjecture that this bias is unavoidable in general. Our analysis recovers exactly the particular case when $f$ is $m$-strongly convex and has an $M$-Lipschitz continuous gradient, where it satisfies conditions  ~\eqref{assumption:A1}--\eqref{assumption:A5} with $\phi(\x)=\norm{\x}^2/2$, $\kappa=0$, $R =1$, $\delta=0$, $\beta_1 = 0$, $\beta_2 = \frac{7\sqrt{2}M}{6}$, $\tilde{ \kappa} = 0$, $\rho= \max\{1-mh, Mh -1\}$, and $W_{2,\phi} = W_{2}$. 
Thus the bias term vanishes and Proposition~\ref{cor:constant_step_size} recovers the sampling error bound of LMC from \citep[Theorem~1]{DalalyanKaragulyan19}, recalled in \eqref{eq-convergence-W2} .
Besides, our proposition covers new cases not known in the literature as we now show.

\subsection{Examples}

In this section, we provide two tables to include some examples that satisfy the assumptions \eqref{assumption:A1}--\eqref{assumption:A5} with explicit parameters.
As $\kappa$ is the only constant that depends merely on $\phi$, Table \ref{table:kappa} presents a list of entropy functions that satisfy \eqref{assumption:A1} or not, while Table \ref{table:B} gives the constants involving interplay between $\phi$ and $f$. For instance, in the example of Gamma distribution (Table \ref{table:B}, middle column), one can see clearly how dimension enters the game via $m$ and $M$.

\begin{enumerate}

\item[1.] More generally, $\phi(\x) = \sum_{i=1}^{p} \phi_i(x_i)$ satisfies \eqref{assumption:A1} with $\kappa = \sqrt{2}M'$ provided that $[(\phi_i^*)'']^{-\frac{1}{2}}$ has an $M'$-Lipschitz continuous gradient for each $i$. If $f(\x) = \sum_{i=1}^p f_i(x_i)$, then it satisfies \eqref{assumption:A2} and \eqref{assumption:A5} with $R \le \sum_{i} \E_{\x\sim \pi} [\phi_i''(x_i)] $ and $\delta = 0$. Besides, \eqref{assumption:A3} and \eqref{assumption:A4} are satisfied if, for each $i$, $f_i$ is $m$-strongly convex and has an $M$-Lipschitz continuous gradient relatively to $\phi_i$, in the sense of \cite{LuFreundNesterov17}.

\item[2.] Boltzmann-Shannon entropy: When $\phi(\x) = \sum_{i=1}^{p} x_i\ln(x_i)$, however, condition \eqref{assumption:A1} is violated on $\R_{++}^p$.
\end{enumerate}

\begin{table}
	\centering
	\caption{Common entropy functions and the corresponding $\kappa$ in \eqref{assumption:A1}}
	\begin{tabular}{ |c| c| c|}
		\hline
		$\phi$ & $\kappa$  &Domain\\ 
		\hline
		$\norm{\x}^2/2$ & 0 & $\R^p$ \\  
		\hline
		$- \sum_{i} \ln(x_i)$ & $\sqrt{2}$  & $\R_{++}^p$\\ 
		\hline
		$\sum_{i} x_i \ln(x_i)$ & $\infty$  & $\R_{++}^p$\\ 
		\hline
		$ -\ln(x)-\ln(1-x)$  & $\sqrt{2}$  & $(0,1)$\\ 
		\hline 
		 $\sum_{i}a_i x_i \ln(x_i)-\sum_{i} (1-a_i)\ln(x_i)$     & $\sqrt{\frac{2}{1-\max_i a_i}}$ & {$\R_{++}^p$; $a_i\in[0,1]$ }\\ 
		\hline
		$ (1-x^2)^{-1}$     & $1.43$ & $(-1,1)$\\ 
		\hline
		$  -\ln(x_2^2-x_1^2)$				&	$\sqrt{2}$	& $\{(x_1, x_2): \vert x_1\vert< x_2\}$ \\
		\hline
		$ -\ln(1-x^2)$     &	$\sqrt{2}$	&  $(-1,1)$ \\
		\hline
	\end{tabular}
\label{table:kappa}
\end{table}

\begin{table}
	\centering
	\caption{Other parameters in the assumptions~\eqref{assumption:A2}--\eqref{assumption:A5}}
	\begin{tabular}{ |c |c| c| c|}
		\hline
		&$\phi = \norm{\x}^2/2$\hspace{0.86cm}\ ~& $\phi = -\sum_{i=1}^{p} \ln(x_i)$\hspace{0.44cm}\ ~& $\phi = -\ln(x)-\ln(1-x)$\hspace{0.56cm}\ ~\\ 
		& $f = \x^T {\bf A}\x/2 + C$  & $f = \sum_{i}(1-a_i)\ln(x_i)$& $f =(1-a_1)\ln(x)$\hspace{1.4cm}\ ~ \\
		& (${\bf A}^T = {\bf A}$) & ~\hspace{0.58cm} $ + b_i x_i+C$&~\hspace{0.72cm}$ + (1-a_2)\ln(1-x)+C$\\
		\hline
		$R$ & $1$& $\sum_i (a_i-3)!/b_i^{a_i-2}$ & $\frac{(a_1-3)!(a_2-1)! + (a_1-1)!(a_2-3)!}{(a_1 + a_2-3)!}$ \\  
		\hline
		$m$ & $\lambda_{\min}({\bf A})$ & $\min_i\{a_i-1\} $  & $\min\{a_1-1, a_2 -1\}$\\ 
		\hline
		$M$ & $\lambda_{\max}({\bf A})$  & $\max_i\{a_i-1\}$  & $\max\{a_1-1, a_2 -1\}$\\ 
		\hline
		$\delta$& 0     & 0 & 0 \\ 
		\hline
	\end{tabular}
\label{table:B}
\end{table}


\bigskip
\section{Proof of the Main Result}\label{section:sketch_proof}
	
	\medskip	
	
\subsection{A general non-asymptotic error bound}

We are now in position to state our main theorem.

\begin{theorem}[Contractibility]\label{thm:contractibility}
Assume that~\eqref{assumption:A1}--\eqref{assumption:A5} hold such that $\tilde{ \kappa} < \sqrt{2m}$. Suppose $h_{k+1} < \min\left(\frac{2m-\tilde{ \kappa}^2}{m^2},\frac{2M-\tilde{ \kappa}^2}{M^2}\right) $. Then 
	\begin{flalign}\label{eqn:main_contraction}
	\begin{aligned}
		W_{2,\phi}(\mu_{k+1}, \pi) \le \rho_{k+1} W_{2,\phi}(\mu_{k}, \pi) + h_{k+1} p^{\frac{1}{2}}\beta_1(R, \kappa)
		+ h_{k+1}^{\frac{3}{2}}p^{\frac{1}{2}} \beta_2(R, M, \kappa).
	\end{aligned}
	\end{flalign}
	Here $\rho_{k+1} \eqdef \max\left(\sqrt{(1-mh_{k+1})^2 + h_{k+1} \tilde{ \kappa}^2}, \sqrt{(1-Mh_{k+1})^2 + h_{k+1}\tilde{ \kappa}^2}\right) <1$, $\beta_1(R, \kappa) = \kappa R^{\frac{1}{2}}$, and $\beta_2(R, M, \kappa) = M^{\frac{1}{2}} R^{\frac{1}{2}}\left(\frac{7\sqrt{2M}}{6} +  \frac{\kappa}{\sqrt{3}} \right)$ are dimension-free constants.
\end{theorem}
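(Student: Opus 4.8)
The plan is to set up a one-step synchronous coupling between the HRLMC iterate and a stationary process, but—crucially—to perform the coupling in the \emph{dual} coordinates $\Y_k \eqdef \nabla\phi(\X_k)$, where the Wasserstein distance $W_{2,\phi}$ becomes the ordinary $W_2$ distance by the isometry noted in Section~2.3. Concretely, let $\X_k \sim \mu_k$ and let $\X^\pi_k$ be a stationary copy, i.e. the value at time $kh_{k+1}$ (resp.\ $0$) of a solution of the continuous SDE~\eqref{SDE_Y}/\eqref{SDE_X_1} initialized from $\pi$, so that $\X^\pi_k \sim \pi$ for all $k$. Coupling the Brownian increments with the Gaussian vector ${\bm\xi}_{k+1}$, I would write $\Y_{k+1} - \Y^\pi_{k+1}$ as the sum of three pieces: (i) the "deterministic drift" contribution $\big(\nabla\phi(\X_k) - h_{k+1}\nabla f(\X_k)\big) - \big(\nabla\phi(\X^\pi_k) - h_{k+1}\nabla f(\X^\pi_k)\big)$; (ii) the difference between the discrete Gaussian diffusion with covariance $2h_{k+1}D^2\phi(\X_k)$ and the corresponding exact diffusion along the stationary trajectory; and (iii) the drift discretization error of the stationary trajectory over $[0,h_{k+1}]$. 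The goal is to bound $\E\|\Y_{k+1}-\Y^\pi_{k+1}\|_2^2$ and take square roots, which will reproduce~\eqref{eqn:main_contraction} after optimizing the choice of $\X^\pi_k$ (i.e.\ taking the infimum over couplings of $\mu_k$ and $\pi$).

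The contraction factor comes entirely from term (i). Writing $\u \eqdef \nabla\phi(\X_k)-\nabla\phi(\X^\pi_k)$, we have
\begin{align*}
\|\u - h_{k+1}(\nabla f(\X_k)-\nabla f(\X^\pi_k))\|_2^2 = \|\u\|_2^2 - 2h_{k+1}\langle \nabla f(\X_k)-\nabla f(\X^\pi_k), \u\rangle + h_{k+1}^2\|\nabla f(\X_k)-\nabla f(\X^\pi_k)\|_2^2,
\end{align*}
and assumptions~\eqref{assumption:A3}--\eqref{assumption:A4} bound the cross term below by $2mh_{k+1}\|\u\|_2^2$ and the last term above by $M^2h_{k+1}^2\|\u\|_2^2$, giving a factor $\max((1-mh_{k+1})^2,(1-Mh_{k+1})^2)$ on $\E\|\u\|_2^2 = W_{2,\phi}^2(\mu_k,\pi)$ (after taking the optimal coupling). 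The extra $h_{k+1}\tilde\kappa^2$ inside $\rho_{k+1}$ must come from the cross-terms between (i) and the noise pieces (ii)--(iii): here the self-concordance-like bound~\eqref{assumption:A1} controls $\|D^2\phi(\X_k)^{1/2} - D^2\phi(\X^\pi_k)^{1/2}\|_F$ by $\kappa\|\u\|_2$, which is exactly what is needed to compare the two Gaussian covariances, and the commutator bound~\eqref{assumption:A5} enters through the generalized Baillon–Haddad inequality (Proposition~\ref{prop:Baillon-Haddad_extend_weak}) to upgrade $\kappa$ to $\tilde\kappa = \sqrt{\kappa^2 + \delta(4M+\delta)/(2(m+M))}$ when the drift and diffusion cross-terms are handled jointly. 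I expect this bookkeeping—keeping the $h\tilde\kappa^2$ term linear rather than generating a spurious $\sqrt{h}$ term, and correctly splitting contributions between $\rho_{k+1}$ and the additive $\beta_1, \beta_2$ terms—to be the main obstacle.

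For the additive error terms: $\beta_1(R,\kappa)p^{1/2}h_{k+1}$ should arise as the "irreducible" mismatch between the discrete covariance $D^2\phi(\X_k)$ (frozen at the left endpoint) and the correct Riemannian diffusion, whose size is governed by $\E_\pi\|D^2\phi\|_2 = R$ (assumption~\eqref{assumption:A2}) through $\kappa$; a short computation of $\E[\,\|(2h_{k+1}D^2\phi(\X^\pi_k))^{1/2}{\bm\xi}\|_2^2\,]$-type quantities and Frobenius-norm bookkeeping using~\eqref{assumption:A1} yields the factor $p^{1/2}$. The $\beta_2(R,M,\kappa)p^{1/2}h_{k+1}^{3/2}$ term is the standard Euler–Maruyama drift-plus-diffusion discretization error over one step: one controls $\E\|\int_0^{h}(\text{drift at }s - \text{drift at }0)ds\|_2^2$ and the analogous diffusion term using the SDE~\eqref{SDE_X_1}, Itô's formula, $M$-relative-smoothness for the drift regularity, and~\eqref{assumption:A1}/\eqref{assumption:A2} for the diffusion regularity; the explicit constant $\frac{7\sqrt{2M}}{6}+\frac{\kappa}{\sqrt3}$ should fall out of carefully tracking these two contributions, paralleling the $1.65$ constant in~\eqref{eq-convergence-W2}. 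Finally I would combine the three pieces via $\|a+b+c\|_2 \le \|a\|_2 + \|b\|_2 + \|c\|_2$ in $L^2$ (Minkowski), verify $\rho_{k+1}<1$ under the stated step-size restriction $h_{k+1} < \min((2m-\tilde\kappa^2)/m^2, (2M-\tilde\kappa^2)/M^2)$ and $\tilde\kappa<\sqrt{2m}$, and read off~\eqref{eqn:main_contraction}; Proposition~\ref{cor:constant_step_size} then follows by unrolling the recursion with $h_k\equiv h$ and summing the geometric series.
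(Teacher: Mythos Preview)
Your overall architecture---synchronous coupling in the dual coordinates, decomposing $\nabla\phi({\bf L}_h)-\nabla\phi(\X_{k+1})$, and closing with Minkowski---matches the paper. But the analysis of the drift piece (your term~(i)) contains a real error, and your account of where $\tilde\kappa$ comes from is backwards.

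\medskip
\textbf{The naive bound on (i) does not give the claimed factor.} From (A3) and (A4) you get
\[
\|\u\|_2^2 - 2h\langle \nabla f(\X_k)-\nabla f(\X^\pi_k),\u\rangle + h^2\|\nabla f(\X_k)-\nabla f(\X^\pi_k)\|_2^2 \le (1 - 2mh + M^2h^2)\|\u\|_2^2,
\]
which is \emph{strictly larger} than $\max((1-mh)^2,(1-Mh)^2)\|\u\|_2^2$ whenever $m<M$. With this factor you cannot reach the stated $\rho_{k+1}$; the step-size restriction would degrade to roughly $h<(2m-\tilde\kappa^2)/M^2$. The paper obtains the sharp factor precisely by applying the generalized Baillon--Haddad inequality (Proposition~\ref{prop:Baillon-Haddad_extend_weak}) \emph{to the drift term itself}: the coercivity $\langle\nabla f(\X_k)-\nabla f(\X^\pi_k),\u\rangle \ge A\|\nabla f(\X_k)-\nabla f(\X^\pi_k)\|_2^2 + B\|\u\|_2^2$ with $A=1/(m+M)$ lets you absorb the quadratic $h^2\|\nabla f\|_2^2$ term back into the cross term, and what remains is exactly $(1-mh)^2 + h\,\frac{\delta(4M+\delta)}{2(m+M)}$ (or the $M$-analogue when $h>2/(m+M)$). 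So the $\delta$-part of $\tilde\kappa^2$ lives \emph{inside} the bound on $\E\|{\bf A}\|_2^2$, not in any cross-term with the noise.

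\medskip
\textbf{There are no drift--noise cross-terms.} The paper actually uses a four-term split ${\bf A}+{\bf C}+{\bf G}+{\bf H}$, where ${\bf G}=\sqrt{2h}\big([D^2\phi({\bf L}_0)]^{1/2}-[D^2\phi(\X_k)]^{1/2}\big){\bm\xi}_{k+1}$ is the covariance mismatch between the two coupled points and ${\bf H}$ is the along-trajectory diffusion error. Because ${\bm\xi}_{k+1}$ is independent of $(\X_k,{\bf L}_0)$, one has $\E\langle {\bf A},{\bf G}\rangle=0$, so $\E\|{\bf A}+{\bf G}\|_2^2=\E\|{\bf A}\|_2^2+\E\|{\bf G}\|_2^2$. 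Assumption~(A1) gives $\E\|{\bf G}\|_2^2\le h\kappa^2\|\u\|_2^2$ directly, and adding this to the Baillon--Haddad bound on $\E\|{\bf A}\|_2^2$ produces $\rho_{k+1}^2=\tau^2+h\kappa^2$. This orthogonality is what keeps the $\kappa$-contribution at order $h$ rather than $\sqrt{h}$---the ``main obstacle'' you flagged is resolved by independence, not by delicate bookkeeping.

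\medskip
The remaining additive terms then split cleanly: ${\bf C}$ (drift discretization along the stationary trajectory) gives the $\frac{7\sqrt{2}}{6}M$ piece of $\beta_2$, while ${\bf H}$ (controlled via It\^o isometry, (A1), and Proposition~\ref{prop:main_proof_phi}) gives both the $\beta_1=\kappa R^{1/2}$ bias at order $h$ and the $\kappa/\sqrt{3}$ piece of $\beta_2$ at order $h^{3/2}$.
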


The main arguments to prove Theorem~\ref{thm:contractibility} will be given in Section~\ref{subsection:Baillon-Haddad} and \ref{subsec:proofthm}. This result implies in particular Proposition~\ref{cor:constant_step_size} when the step-sizes are constant. Besides, the result in \eqref{eqn:main_contraction} is invariant in scalings like $\tilde{\phi} = \alpha \phi$ for any $\alpha>0$.

Theorem~\ref{thm:contractibility} has the next corollary. In a nutshell, this corollary states that with vanishing step-sizes, the HRLMC algorithm contracts toward
a Wasserstein ball centered at the target distribution $\pi$ with radius $r_0$. The explicit formula of this radius is $r_0\eqdef \frac{2\kappa p^{\frac{1}{2}}R^{\frac{1}{2}}}{2m-\tilde{ \kappa}^2}$, which scales as $\mathcal{O}(p^{\frac{1}{2}})$ in the dimension. Moreover, once entering the ball, the distribution $\mu_k$ never leaves it. When $\phi=\norm\x^2/2$, it is clear that $r_0 = 0$ and therefore the algorithm converges to the stationary distribution. 

In the following, we use the notation $\mathcal{B}_{r}(\pi) \eqdef \{\mu \in \mathcal{P}(\Xx)| W_{2,\phi}(\mu, \pi) < r \}$ and $\overline{\mathcal{B}}_{r}(\pi) \eqdef \{\mu \in \mathcal{P}(\Xx)| W_{2,\phi}(\mu, \pi) \le r \}$, where $\mathcal{P}(\Xx)$ is the space of probability distributions on $\Xx$.

\begin{corollary}[Contracting to a Wasserstein ball]\label{cor:contraction_ball}
	Assume~\eqref{assumption:A1}--\eqref{assumption:A5} hold with $\tilde{ \kappa} 	< \sqrt{2m}$. 
	Then the following statements hold:
	\begin{enumerate}
		\item[(i)] For any $\mu_0 \in \mathcal{P}(\mathcal{X})$, there exist step-sizes 
		$\{h_{k}\}_{k \in \N}$ such that 		
		$\limsup\limits_{k \rightarrow \infty} W_{2,\phi}(\mu_{k}, \pi) 
		\le  r_0$.
		\item[(ii)] If $\mu_k \notin \overline{\mathcal{B}}_{r_0}(\pi)$, then there exists a step-size $h_{k+1}$ such that $W_{2,\phi}(\mu_{k+1}, \pi) <  W_{2,\phi}(\mu_{k}, \pi)$. 
		\item[(iii)] If $\mu_k \in \mathcal{B}_{r_0}(\pi)$,  then there exists  $h_{k+1}>0$ such that $\mu_{k+1} \in \mathcal{B}_{r_0}(\pi)$.
		\item[(iv)] If $\mu_k \in \overline{\mathcal{B}}_{r_0}(\pi) \setminus\mathcal{B}_{r_0}(\pi)$, then for any $r > r_0$, there exists $h_{k+1}>0$, such that $\mu_{k+1} \in \mathcal{B}_{r}(\pi)$.
	\end{enumerate}	
\end{corollary}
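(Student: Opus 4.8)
The plan is to derive Corollary~\ref{cor:contraction_ball} directly from Theorem~\ref{thm:contractibility} by treating the one-step bound~\eqref{eqn:main_contraction} as a scalar recursion in the quantity $w_k \eqdef W_{2,\phi}(\mu_k,\pi)$. Writing $a(h) \eqdef \rho_{k+1}(h)$, $b_1(h) \eqdef h\,p^{1/2}\beta_1(R,\kappa)$, and $b_2(h)\eqdef h^{3/2}p^{1/2}\beta_2(R,M,\kappa)$, Theorem~\ref{thm:contractibility} says $w_{k+1}\le a(h)\,w_k + b_1(h) + b_2(h)$ whenever $h<h^\star \eqdef \min\!\big(\tfrac{2m-\tilde\kappa^2}{m^2},\tfrac{2M-\tilde\kappa^2}{M^2}\big)$. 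The first step is to record the elementary asymptotics of these coefficients as $h\to 0^+$: since $\tilde\kappa^2<2m\le 2M$, one has $a(h)^2 = 1-(2m-\tilde\kappa^2)h + m^2h^2$ (the $m$-branch dominates for small $h$ because $(2m-\tilde\kappa^2)\le(2M-\tilde\kappa^2)$ is the relevant comparison after checking both branches), hence $a(h) = 1 - \tfrac{1}{2}(2m-\tilde\kappa^2)h + o(h)$, so $1-a(h)\sim \tfrac{1}{2}(2m-\tilde\kappa^2)h$. Consequently $\tfrac{b_1(h)}{1-a(h)} \to \tfrac{p^{1/2}\kappa R^{1/2}}{\tfrac12(2m-\tilde\kappa^2)} = \tfrac{2\kappa p^{1/2}R^{1/2}}{2m-\tilde\kappa^2} = r_0$, while $\tfrac{b_2(h)}{1-a(h)} = \mathcal{O}(h^{1/2})\to 0$. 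This computation is the analytic heart of the argument and should be stated as a short lemma.

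For part (i), I would fix any vanishing, non-summable-in-the-right-sense schedule; concretely, choose $h_k$ decreasing to $0$ with $h_k<h^\star$ and $\sum_k h_k = \infty$ (e.g. $h_k \asymp 1/k$, truncated below $h^\star$). Unrolling the recursion gives $w_{k} \le \Big(\prod_{j=1}^{k} a(h_j)\Big) w_0 + \sum_{i=1}^{k}\Big(\prod_{j=i+1}^{k} a(h_j)\Big)\big(b_1(h_i)+b_2(h_i)\big)$. Because $1-a(h_j)\gtrsim h_j$ and $\sum h_j=\infty$, the product $\prod_{j\le k} a(h_j)\to 0$, killing the initial-condition term. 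For the sum, a standard argument — split at an index $N$ so that for $i>N$ one has $\tfrac{b_1(h_i)+b_2(h_i)}{1-a(h_i)} \le r_0+\varepsilon$, then use $\sum_{i>N}^{k}\big(\prod_{j=i+1}^k a(h_j)\big)(1-a(h_i)) \le 1$ together with vanishing of the prefix-sum block via the product going to zero — yields $\limsup_k w_k \le r_0+\varepsilon$, and $\varepsilon>0$ is arbitrary. This is the ``$\mathcal{O}(h)$ step-size / geometric-series'' lemma familiar from LMC analyses (cf.\ the derivation of~\eqref{eq-convergence-W2}); I would phrase it as: if $\sum(1-a_i)=\infty$ and $\sup_i \tfrac{c_i}{1-a_i}\le r$ eventually, then $\limsup w_k\le r$.

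Parts (ii)--(iv) are one-step statements and follow from monotonicity/continuity of $h\mapsto a(h)w_k+b_1(h)+b_2(h)$ at $h=0^+$. For (ii), if $w_k>r_0$, then since $\lim_{h\to0^+}\big(a(h)w_k+b_1(h)+b_2(h)\big) = w_k$ with right-derivative $-\tfrac12(2m-\tilde\kappa^2)w_k + p^{1/2}\kappa R^{1/2} = -\tfrac12(2m-\tilde\kappa^2)(w_k-r_0) < 0$, the map strictly decreases for small $h>0$, so some $h_{k+1}\in(0,h^\star)$ gives $w_{k+1}<w_k$. For (iii), if $w_k<r_0$, continuity at $h=0^+$ gives $a(h)w_k+b_1(h)+b_2(h)\to w_k<r_0$, so for $h_{k+1}$ small enough the RHS stays $<r_0$, hence $w_{k+1}<r_0$. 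For (iv), if $w_k=r_0$ and $r>r_0$, again the RHS $\to r_0<r$ as $h\to0^+$, so a small $h_{k+1}$ gives $w_{k+1}<r$. The only mild subtlety — and the main thing to get right — is the sign of the right-derivative in (ii), i.e.\ confirming that the linear-in-$h$ gain from contraction genuinely beats the linear-in-$h$ bias $b_1$ precisely when $w_k>r_0$; this is exactly where the definition $r_0 = \tfrac{2\kappa p^{1/2}R^{1/2}}{2m-\tilde\kappa^2}$ is forced, and where one must also double-check that the $m$-branch of $\rho_{k+1}$, not the $M$-branch, controls the small-$h$ behavior (it does, since $2m-\tilde\kappa^2\le 2M-\tilde\kappa^2$ makes the $m$-branch larger, hence the max). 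I expect no serious obstacle beyond this bookkeeping; the genuinely substantive content is entirely in Theorem~\ref{thm:contractibility}, which we are entitled to assume.
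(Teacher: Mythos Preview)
Your proposal is correct. The treatment of parts~(ii)--(iv) is essentially identical to the paper's: the paper packages the same continuity-at-zero idea into the auxiliary function $r(t) \eqdef \frac{t\alpha_1 + t^{3/2}\alpha_2}{1-\sqrt{(1-mt)^2+\tilde\kappa^2 t}}$, checks $r(0^+)=r_0$ and $r'>0$, and then runs the same one-step comparisons you describe via the right-derivative. Your derivative computation and the paper's $r(t)$ formulation are two renderings of the same calculation.

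The genuine difference is in part~(i). The paper invokes Chung's lemma from stochastic approximation: with $h_k=a/k$ it writes $\rho_k\le 1-bh_k$ (resp.\ $\le 1-bh_k+\tfrac{m^2}{2}h_k^2$), multiplies through by $h_{k+1}^s$, and applies the lemma to conclude $\limsup W_{2,\phi}(\mu_k,\pi)\le p^{1/2}\beta_1 b^{-1}$; a two-pass argument is needed, first with some $b_1<\tfrac{2m-\tilde\kappa^2}{2}$ to get boundedness, then with $b=\tfrac{2m-\tilde\kappa^2}{2}$ to hit the sharp $r_0$. Your unroll-and-split argument is more elementary and self-contained: once you know $c_i/(1-a_i)\to r_0$ and $\sum(1-a_i)=\infty$, the telescoping identity $\sum_{i>N}^k(\prod_{j>i}a_j)(1-a_i)=1-\prod_{j>N}a_j\le 1$ gives the tail bound in one stroke, and the prefix dies with the product. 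This avoids the external lemma and the two-stage refinement, at the cost of not producing an explicit rate in $k$ (which the corollary does not ask for anyway). Both routes are valid; yours is arguably cleaner for the stated goal.
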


The proof can be found in Appendix~\ref{app:proof_of_corollaries} where we also construct an example of appropriate vanishing step-sizes $\{h_k\}_{k\in\N}$ that are in the order of $\frac{1}{k}$, and which guarantees that the claims of Corollary~\ref{cor:contraction_ball} hold.

\medskip\noindent\textbf{Iteration complexity bounds.}
From these guarantees, for any $\varepsilon > 0$ small enough, we can now derive the smallest number of iterations $K_\varepsilon$ (i.e., iteration complexity bound), such that the corresponding upper-bound of HRLMC with constant step-size is smaller than $r_0+\varepsilon$ after $K_\varepsilon$ steps. 
	More precisely, for any $\varepsilon$ such that 
	$0<\varepsilon<\min\left(\frac{4\sqrt{2}p^{\frac{1}{2}}\beta_2}{m\sqrt{2m-\tilde{ \kappa}^2}}, \frac{2\tilde{ \kappa}^2 p^{\frac{1}{2}}\beta_1}{(2m-\tilde{ \kappa}^2)^2}, \frac{32p^{\frac{1}{2}} \beta_2^2}{\tilde{ \kappa}^2(4m-\tilde{ \kappa}^2)^2\beta_1}\right)$, the number of iterations needed to get $W_{2,\phi}(\mu_{k}, \pi) < r_0 + \varepsilon$ with constant step-size is 
	\[
	K_\varepsilon \gtrsim \frac{pM R \left(\sqrt{M} +  \kappa \right)^2}{(2m-\tilde{ \kappa}^2)^3}\frac{1}{\varepsilon^2} \ln\left(\frac{1}{\varepsilon}\right) .
	\] 
	In the case when $\kappa=0$, it gives 
	\[
	K_\varepsilon \gtrsim \frac{p(m+M)^3 M^2 R}{(4m^2 + 4M(m-\delta) - \delta^2)^3}\frac{1}{\varepsilon^2} \ln\left(\frac{1}{\varepsilon}\right) . 
	\]
	In the classical case when $f$ is $m$-strongly convex and has an $M$-Lipschitz continuous gradient, 
	the bound becomes 
	\[
	K_\varepsilon \gtrsim \frac{pM^2}{m^3\varepsilon^2}\ln\left(\frac{1}{\varepsilon}\right),
	\] 
	which coincides with the best result in the literature of Euler-Maryuama LMC 
	(See \cite[Table~1]{DurmusMajewskiMiasojedow19} for an overview).

\medskip
\subsection{Baillon-Haddad type inequality}\label{subsection:Baillon-Haddad} 

Baillon and Haddad showed that if the gradient of a convex and continuously differentiable function is nonexpansive, then it is firmly nonexpansive (\cite{BaillonHaddad77}). This is one of the critical  steps in the proof of convergence when $\phi = \norm{\x}^2/2$. We extend the Baillon-Haddad theorem to the case of relative Lipschitz-smoothness~\eqref{assumption:A4}. We state a weaker version here, which is sufficient for the proof of the main theorem, and defer a stronger version with complete proof to the Appendix~\ref{app:Baillon-Haddad_extend}, which is of independent interest.

\medskip	

\begin{proposition}[Baillon-Haddad extension]\label{prop:Baillon-Haddad_extend_weak}
	Assume $f$ satisfies assumptions \eqref{assumption:A3}-\eqref{assumption:A5}, then for any $\x_1, \x_2 \in \mathcal{X}$,
	\begin{flalign}\label{eqn:Baillon-Haddad_extend}
	\begin{aligned}
	&\langle\nabla f(\x_1) - \nabla f(\x_2), \nabla\phi(\x_1) - \nabla\phi(\x_2)\rangle \\
	\ge & A\norm{\nabla f(\x_1) - \nabla f(\x_2)}_2^2 
		 + B\norm{\nabla\phi(\x_1) - \nabla\phi(\x_2)}_2^2,
	\end{aligned}
	\end{flalign}
	where the constants are $A \eqdef \frac{1}{m+M}$ and $B \eqdef \frac{4mM -4M\delta - \delta^2}{4(m+M)}$.
\end{proposition}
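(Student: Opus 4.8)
The plan is to push the inequality forward through the mirror map $\nabla\phi$ to the dual domain $\mathcal Y=\Int\dom(\phi^*)$, where the vector field $G\eqdef\nabla f\circ\nabla\phi^*$ is the natural object: indeed \eqref{assumption:A3}--\eqref{assumption:A5} are precisely the statements that $G$ is strongly monotone, Lipschitz, and ``almost symmetric''. Concretely, for $\x_1,\x_2\in\mathcal X$ I would set $u_i\eqdef\nabla\phi(\x_i)$, $v\eqdef u_1-u_2$ and $w\eqdef G(u_1)-G(u_2)=\nabla f(\x_1)-\nabla f(\x_2)$; since $\nabla\phi$ is a bijection onto $\mathcal Y$ with $\nabla\phi=(\nabla\phi^*)^{-1}$, the claim \eqref{eqn:Baillon-Haddad_extend} is equivalent to $\langle w,v\rangle\ge A\norm{w}_2^2+B\norm{v}_2^2$. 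Note that $\mathcal Y$ is convex, being the interior of the domain of the convex conjugate $\phi^*$, which will let me integrate $DG$ along segments.

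Next I would record pointwise bounds on the Jacobian $DG$. By the chain rule and the identity $D^2\phi^*(u)=[D^2\phi(\nabla\phi^*(u))]^{-1}$, one has $DG(u)=D^2 f(\x)[D^2\phi(\x)]^{-1}$ with $\x=\nabla\phi^*(u)$. Differentiating the finite-difference inequality \eqref{assumption:A3} in a fixed direction (divide by $s^2$, let $s\to 0^+$) shows that the symmetric part of $DG(u)$ dominates $m{\bf I}_p$; \eqref{assumption:A4} says $G$ is globally $M$-Lipschitz on $\mathcal Y$, hence $\norm{DG(u)}_2\le M$ and the symmetric part of $DG(u)$ is $\preceq M{\bf I}_p$; and since $D^2 f$ and $[D^2\phi]^{-1}$ are symmetric, the antisymmetric part of $DG(u)$ equals $\tfrac12[D^2 f(\x),[D^2\phi(\x)]^{-1}]=-\tfrac12[[D^2\phi(\x)]^{-1},D^2 f(\x)]$, which \eqref{assumption:A5} bounds in spectral norm by $\delta/2$.

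Then I would write $w=\bar J v$ with $\bar J\eqdef\int_0^1 DG(u_2+tv)\,\mathrm{d}t$ (legitimate because the segment stays in the convex set $\mathcal Y$) and split $\bar J=S+N$ into its symmetric and antisymmetric parts; averaging the previous bounds gives $m{\bf I}_p\preceq S\preceq M{\bf I}_p$ and $\norm{N}_2\le\delta/2$. Now it is a one-matrix computation: $\langle w,v\rangle=\langle Sv,v\rangle$ since $N$ is antisymmetric; the operator inequality $(S-m{\bf I}_p)(M{\bf I}_p-S)\succeq 0$ gives $(m+M)\langle Sv,v\rangle\ge\norm{Sv}_2^2+mM\norm{v}_2^2$; and expanding $\norm{w}_2^2=\norm{Sv}_2^2+2\langle Sv,Nv\rangle+\norm{Nv}_2^2$ with $\norm{Sv}_2\le M\norm{v}_2$, $\norm{Nv}_2\le\tfrac\delta2\norm{v}_2$ yields $\norm{Sv}_2^2\ge\norm{w}_2^2-(M\delta+\tfrac{\delta^2}{4})\norm{v}_2^2$. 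Substituting and dividing by $m+M$ produces exactly $\langle w,v\rangle\ge\frac1{m+M}\norm{w}_2^2+\frac{4mM-4M\delta-\delta^2}{4(m+M)}\norm{v}_2^2$, i.e. the asserted constants $A$ and $B$; undoing the substitution of the first paragraph finishes the proof.

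The only step that requires care is the passage to the averaged Jacobian $\bar J$: the two terms $\langle w,v\rangle$ and $\norm{w}_2^2$ in the target inequality cannot be estimated pointwise along the connecting segment, so one is forced to average $DG$ first and only then invoke the elementary matrix argument — the crucial point being that the spectral bounds on the symmetric part and the $\delta/2$ bound on the antisymmetric part survive averaging. Everything else (the chain-rule formula for $DG$, the commutator identity, the convexity of $\mathcal Y$) is routine, and the matrix lemma is a direct perturbation of the classical Baillon-Haddad argument, recovering it verbatim when $\delta=0$ (in which case $B=mM/(m+M)$).
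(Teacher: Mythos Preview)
Your argument is correct and takes a genuinely different route from the paper. The paper decomposes the vector field $G=\nabla f\circ\nabla\phi^*$ globally: it invokes the Poincar\'e lemma to produce a vector field $\mathbf{g}$ with $\nabla\mathbf{g}=\tfrac12[\mathbf{B},\mathbf{A}]$ (the antisymmetric part of $DG$), shows $G-\mathbf{g}$ is a gradient $\nabla\tilde f$ with $m\mathbf{I}_p\preceq D^2\tilde f\preceq M\mathbf{I}_p$, applies the classical Baillon--Haddad inequality to $\tilde f$, and then controls the $\mathbf{g}$-correction terms via \eqref{assumption:A4}--\eqref{assumption:A5}. You instead bypass the construction of any potential: you average $DG$ along the segment to get a single matrix $\bar J=S+N$, observe that the pointwise bounds $m\mathbf{I}_p\preceq S(t)\preceq M\mathbf{I}_p$ and $\norm{N(t)}_2\le\delta/2$ survive the averaging, and then run the elementary operator inequality $(S-m\mathbf{I}_p)(M\mathbf{I}_p-S)\succeq0$ directly. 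Your approach is lighter---no differential forms, no Stokes---and needs only the convexity of $\mathcal Y$ (automatic, since $\mathcal Y=\Int\dom\phi^*$) rather than the contractibility hypothesis the paper uses to invoke Poincar\'e. The paper's machinery, on the other hand, yields a global object $\tilde f$ and is what allows them to prove the stronger variant in their appendix under the weaker two-sided monotonicity hypothesis in place of \eqref{assumption:A4}.
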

\begin{proof}[Proof sketch of Proposition~\ref{prop:Baillon-Haddad_extend_weak}]
	Denote  ${\bf A}({\bf y})\eqdef D^2 f(\nabla\phi^*({\bf y}))$ and ${\bf B}({\bf y})\eqdef D^2 \phi^*({\bf y})$.
	By the Poincar\'{e} lemma, since $d\left(\frac{1}{2}\sum_{i,j} \left[ ({\bf A}{\bf B})_{ji} - ({\bf A}{\bf B})_{ij} \right] dy_i \wedge dy_j\right)	= 0$, there exists a vector field ${\bf g}:\mathcal{Y}\rightarrow \R^p$ such that $\nabla {\bf g} = \frac{1}{2} [{\bf B}, {\bf A}].$ Define $ {\bf \tilde{g}}\eqdef \nabla f\circ\nabla\phi^* -{\bf g}$. By Stokes-Cartan theorem, ${\bf \tilde{g}}$ is path-independent. Thus, there exists a function $\tilde{f}$ on $\mathcal{Y}$ such that $\nabla \tilde{f} = {\bf \tilde{g}} = \nabla f\circ \nabla \phi^* - {\bf g}$.

	So, $D^2 \tilde{f}	= \frac{1}{2}[{\bf A}{\bf B}+ {\bf B}{\bf A}].$
	Thus, 
	by assumption \eqref{assumption:A3}-\eqref{assumption:A4}, there exist $0 \le m \le M$ such that 
	$m {\bf I}_p \preceq D^2 \tilde{f} ({\bf y}) \preceq M {\bf I}_p$ for all ${\bf y}\in \mathcal{Y}$. By the classical Baillon-Haddad theorem, we know 
	\begin{flalign*}
			&\langle\nabla\tilde{f}({\bf y}_1) - \nabla\tilde{f}({\bf y}_2), {\bf y}_1 - {\bf y}_2\rangle \\
		\ge & \frac{1}{m+M}\norm{\nabla \tilde{f}({\bf y}_1) - \nabla \tilde{f}({\bf y}_2)}_2^2 + \frac{mM}{m+M}\norm{{\bf y}_1 - {\bf y}_2}_2^2 \\
		\ge & \frac{1}{m+M}\norm{\nabla f\circ \nabla\phi^*({\bf y}_1) - \nabla f\circ \nabla\phi^*({\bf y}_2)}_2^2 - \frac{1}{m+M}\norm{ {\bf g}({\bf y}_1)- {\bf g}({\bf y}_2)}_2^2\\
		&    - \frac{2}{m+M}\langle\nabla \tilde{ f}({\bf y}_1) - \nabla \tilde{ f}({\bf y}_2), {\bf g}({\bf y}_1)- {\bf g}({\bf y}_2)\rangle + \frac{mM}{m+M}\norm{{\bf y}_1 - {\bf y}_2}_2^2.
	\end{flalign*}

	Since $\nabla {\bf g}$ is anti-symmetric, $\langle {\bf g}({\bf y}_1)  - {\bf g}({\bf y}_2), {\bf y}_1 - {\bf y}_2 \rangle = 0$ for any ${\bf y}_1, {\bf y}_2 \in \mathcal{Y}$.
	By assumption \eqref{assumption:A4} and \eqref{assumption:A5}, 
	$\langle\nabla \tilde{ f}({\bf y}_1) - \nabla \tilde{ f}({\bf y}_2), {\bf g}({\bf y}_1)- {\bf g}({\bf y}_2)\rangle 
	\le \frac{1}{2}M\delta \norm{{\bf y}_1 - {\bf y}_2}_2^2.$
	Similarly, $\norm{{\bf g}({\bf y}_1)-{\bf g}({\bf y}_2)}_2^2  \le 
	\frac{\delta^2}{4} \norm{{\bf y}_1 - {\bf y}_2}_2^2$.
	Combining all the equality and equalities above, \begin{flalign*}
		&\langle\nabla f\circ \nabla\phi^*({\bf y}_1) - \nabla f\circ \nabla\phi^*({\bf y}_2), {\bf y}_1 - {\bf y}_2\rangle \\
		= &\langle\nabla\tilde{f}({\bf y}_1) - \nabla\tilde{f}({\bf y}_2), {\bf y}_1 - {\bf y}_2\rangle \\
		\ge &
		\frac{1}{m+M}\norm{\nabla f\circ \nabla\phi^*({\bf y}_1) - \nabla f\circ \nabla\phi^*({\bf y}_2)}_2^2 + \frac{4mM -4M\delta - \delta^2}{4(m+M)}\norm{ {\bf y}_1- {\bf y}_2}_2^2.
	\end{flalign*}
	By change of variable, this implies~\eqref{eqn:Baillon-Haddad_extend}.
\end{proof}

\medskip
\subsection{Proof of Theorem \ref{thm:contractibility}}\label{subsec:proofthm}
We first recall two lemmas and state a proposition that is useful in this section. The proof of Proposition \ref{prop:main_proof_phi} is postponed to Appendix \ref{app:proof_of_corollaries}. The It\^o's isometry theorem can be found, for instance, in {\citep[Corollary~3.1.7]{Oksendal03}} for the one-dimensional case. Here we state its apparent consequence in the multidimensional case.
	
	\medskip	
	
	\begin{lemma}[It\^o's isometry] \label{lemma:Ito's_isometry} 
		Let ${\bf B}: [0, T]\times \Omega \rightarrow \R^p$ be the standard $p$-dimensional Brownian motion and ${\bf M}:  [0, T]\times \Omega \rightarrow \R^{p\times p}$ be a matrix-valued stochastic process adapted to the natural filtration of the Brownian motion. Then
		\begin{flalign}
		\E \left[ \norm{\int_0^T {\bf M}_t d{\bf B}_t}_2^2\right] = \E \left[ \int_0^T \norm{{\bf M}_t}_F^2 dt\right], 
		\end{flalign}
		whenever the integrals make sense.
	\end{lemma}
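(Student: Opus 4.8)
The plan is to reduce the multidimensional statement to the scalar Itô isometry, which is quoted from \citep[Corollary~3.1.7]{Oksendal03}, by decomposing the vector-valued stochastic integral into its coordinate components and exploiting independence of the components of the Brownian motion. First I would write $\mathbf{M}_t = (M_t^{ij})_{1\le i,j\le p}$ and $\mathbf{B}_t = (B_t^1,\dots,B_t^p)^T$, so that the $i$-th component of the integral is $\left(\int_0^T \mathbf{M}_t\, d\mathbf{B}_t\right)_i = \sum_{j=1}^p \int_0^T M_t^{ij}\, dB_t^j$. Then $\E\left[\norm{\int_0^T \mathbf{M}_t\, d\mathbf{B}_t}_2^2\right] = \sum_{i=1}^p \E\left[\left(\sum_{j=1}^p \int_0^T M_t^{ij}\, dB_t^j\right)^2\right]$, and expanding the square gives a sum over pairs $(j,j')$ of terms $\E\left[\left(\int_0^T M_t^{ij}\, dB_t^j\right)\left(\int_0^T M_t^{ij'}\, dB_t^{j'}\right)\right]$.

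The key step is to show that the cross terms with $j\ne j'$ vanish. For this I would invoke the vector-valued Itô isometry / orthogonality of stochastic integrals against independent Brownian motions: since $B^j$ and $B^{j'}$ are independent one-dimensional Brownian motions (components of a standard $p$-dimensional Brownian motion) and both integrands are adapted to the common filtration, the quadratic covariation $\langle B^j, B^{j'}\rangle_t \equiv 0$, hence $\E\left[\int_0^T M_t^{ij}\, dB_t^j \int_0^T M_t^{ij'}\, dB_t^{j'}\right] = \E\left[\int_0^T M_t^{ij} M_t^{ij'}\, d\langle B^j,B^{j'}\rangle_t\right] = 0$. Alternatively, one can approximate the integrands by simple processes and check the cancellation termwise using the tower property and independence of the Brownian increments. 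For the diagonal terms $j=j'$, the scalar Itô isometry gives $\E\left[\left(\int_0^T M_t^{ij}\, dB_t^j\right)^2\right] = \E\left[\int_0^T (M_t^{ij})^2\, dt\right]$.

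Summing the surviving diagonal terms yields $\E\left[\norm{\int_0^T \mathbf{M}_t\, d\mathbf{B}_t}_2^2\right] = \sum_{i=1}^p \sum_{j=1}^p \E\left[\int_0^T (M_t^{ij})^2\, dt\right] = \E\left[\int_0^T \sum_{i,j} (M_t^{ij})^2\, dt\right] = \E\left[\int_0^T \norm{\mathbf{M}_t}_F^2\, dt\right]$, which is the claim. The main (and only real) obstacle is the rigorous justification of the vanishing of the cross terms; everything else is bookkeeping. Since this is a standard fact, I would either cite the multidimensional Itô isometry directly or give the short simple-process argument, and I would note the integrability hypothesis $\E\left[\int_0^T \norm{\mathbf{M}_t}_F^2\, dt\right] < \infty$ is exactly what makes each scalar integral well-defined and square-integrable, so the manipulations are licit.
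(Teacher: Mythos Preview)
Your proposal is correct and follows exactly the reduction the paper has in mind: the paper does not actually prove this lemma but merely cites the one-dimensional Itô isometry from \citep[Corollary~3.1.7]{Oksendal03} and declares the multidimensional version an ``apparent consequence.'' Your coordinate decomposition, together with the vanishing of cross terms via $d\langle B^j, B^{j'}\rangle_t = 0$ for $j\neq j'$ and the scalar isometry on the diagonal, is precisely the standard derivation that makes this consequence explicit.
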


\medskip	

\begin{lemma}[Minkowski's integral inequality, {\cite[Appendix~A]{Stein70}}] \label{lemma:Minkowski_inequality}
	Suppose that $(\mathcal{S}_1, \pi_1)$ and $(\mathcal{S}_2, \pi_2)$ are two $\sigma$-finite measure spaces, $l\ge 1$ and $f: \mathcal{S}_1 \times \mathcal{S}_2 \rightarrow \R_{+}$ is measurable, then \begin{flalign}
	\left\{	\int_{\mathcal{S}_1} \left( \int_{\mathcal{S}_2} f({\bf x},{\bf y}) d\pi_2({\bf y})\right)^{l} d\pi({\bf x})\right\}^{\frac{1}{l}} \le \int_{\mathcal{S}_2}	\left( \int_{\mathcal{S}_1} f^l({\bf x},{\bf y}) d\pi({\bf x})	\right)^{\frac{1}{l}}	d\pi_2({\bf y}).
	\end{flalign} 
\end{lemma}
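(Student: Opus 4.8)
The plan is to prove the inequality by $L^l$--$L^{l'}$ duality, with $l' \eqdef l/(l-1)$ the conjugate exponent, which reduces the statement to Tonelli's theorem (Fubini for nonnegative integrands) together with the ordinary H\"older inequality. Set $F({\bf x}) \eqdef \int_{\mathcal{S}_2} f({\bf x},{\bf y})\, d\pi_2({\bf y})$, which is a nonnegative measurable function on $\mathcal{S}_1$ by Tonelli; the left-hand side of the claimed inequality is exactly $\norm{F}_{L^l(\pi_1)}$, and I write $C$ for the right-hand side. If $l = 1$ both sides equal $\int_{\mathcal{S}_1}\int_{\mathcal{S}_2} f\, d\pi_2\, d\pi_1$ by Tonelli and there is nothing to prove, so I assume $l > 1$; and if $C = +\infty$ the inequality is trivial, so I further assume $C < +\infty$.

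For $l > 1$, I use the dual representation of the $L^l$-norm of a nonnegative measurable function on a $\sigma$-finite space, $\norm{F}_{L^l(\pi_1)} = \sup\big\{\int_{\mathcal{S}_1} F g\, d\pi_1 : g \ge 0,\ \norm{g}_{L^{l'}(\pi_1)} \le 1\big\}$. Fixing any admissible $g$, Tonelli (all integrands being nonnegative) permits swapping the order of integration, and then H\"older's inequality in the ${\bf x}$ variable with exponents $l, l'$ gives
\begin{flalign*}
\int_{\mathcal{S}_1} F({\bf x}) g({\bf x})\, d\pi_1({\bf x})
&= \int_{\mathcal{S}_2}\bigg(\int_{\mathcal{S}_1} f({\bf x},{\bf y}) g({\bf x})\, d\pi_1({\bf x})\bigg)\, d\pi_2({\bf y}) \\
&\le \int_{\mathcal{S}_2}\bigg(\int_{\mathcal{S}_1} f({\bf x},{\bf y})^l\, d\pi_1({\bf x})\bigg)^{1/l}\bigg(\int_{\mathcal{S}_1} g({\bf x})^{l'}\, d\pi_1({\bf x})\bigg)^{1/l'}\, d\pi_2({\bf y}) \le C,
\end{flalign*}
the last step using $\norm{g}_{L^{l'}(\pi_1)} \le 1$. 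Taking the supremum over admissible $g$ yields $\norm{F}_{L^l(\pi_1)} \le C$, which is the assertion.

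The only point requiring care --- and the one I expect to be the main obstacle --- is justifying the dual representation of $\norm{F}_{L^l(\pi_1)}$ when $F$ is not a priori known to lie in $L^l$, since a naive appeal to $L^l$ duality there risks circularity. This is handled by a standard truncation: along an exhausting sequence $\{A_n\}$ of sets of finite $\pi_1$-measure, apply the estimate above to $F_n \eqdef \min(F, n)\mathbf{1}_{A_n}$, for which $\norm{F_n}_{L^l(\pi_1)}$ is finite so the elementary duality $\norm{F_n}_{L^l} = \sup_{\norm{g}_{L^{l'}}\le 1}\int F_n g$ is unproblematic, obtaining $\norm{F_n}_{L^l(\pi_1)} \le C$ uniformly in $n$, and then let $n \to \infty$ using the monotone convergence theorem since $F_n \uparrow F$. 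An equivalent self-contained route bypassing duality is to write $F({\bf x})^l = \int_{\mathcal{S}_2} F({\bf x})^{l-1} f({\bf x},{\bf y})\, d\pi_2({\bf y})$, integrate over $\mathcal{S}_1$, swap by Tonelli, apply H\"older with exponents $l', l$ using $(l-1)l' = l$, and divide by $\norm{F}_{L^l(\pi_1)}^{l-1}$ --- again after the same truncation so that this norm is positive and finite before passing to the limit. Either way, the measure-theoretic bookkeeping around the truncation is the only real obstacle; the analytic content is nothing more than Tonelli plus H\"older.
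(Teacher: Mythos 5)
The paper offers no proof of this lemma: it is quoted verbatim from \cite[Appendix~A]{Stein70} and used as a black box, so there is no in-paper argument to compare yours against. Your proof is correct and is the standard one. The duality reduction to Tonelli plus H\"older is exactly the classical route, and you correctly identify and close the one genuine gap, namely that the dual representation $\norm{F}_{L^l(\pi_1)} = \sup_{g \ge 0,\, \norm{g}_{L^{l'}} \le 1} \int F g \, d\pi_1$ must not be invoked circularly when $F$ is not known a priori to be in $L^l$; the truncation $F_n = \min(F,n)\mathbf{1}_{A_n}$ along a $\sigma$-finite exhaustion, combined with $\int F_n g \le \int F g \le C$ and monotone convergence, handles this cleanly (this is where the $\sigma$-finiteness hypothesis of the lemma is actually used). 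The alternative self-contained route you sketch, writing $F^l = \int F^{l-1} f \, d\pi_2$ and dividing by $\norm{F}_{L^l}^{l-1}$ after the same truncation, is equally valid. Nothing further is needed.
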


\medskip	

\begin{remark}
	Assume the same conditions as above, and $f_i: \mathcal{S}_1 \times \mathcal{S}_2 \rightarrow \R_{+}$ are measurable for $i = 1, ... p$, then \begin{flalign}
	\left\{	\int_{\mathcal{S}_1}  \sum_{i=1}^p\left( \int_{\mathcal{S}_2} f_i({\bf x},{\bf y}) d\pi_2({\bf y})\right)^{l} d\pi({\bf x})\right\}^{\frac{1}{l}} \le \int_{\mathcal{S}_2}	\left(  \int_{\mathcal{S}_1}\sum_{i = 1}^p f_i^l({\bf x},{\bf y}) d\pi({\bf x})	\right)^{\frac{1}{l}}	d\pi_2({\bf y}).
	\end{flalign} It can be viewed as Minkowski's inequality applying on $(\mathcal{S}_1\times\{1, ..., p\}, \pi_1 \times \pi_3)$ and $(\mathcal{S}_2, \pi_2)$, where $\pi_3$ is uniform measure up to a constant multiplication.
\end{remark}
	
\medskip	
	
\begin{proposition}\label{prop:main_proof_phi}
	Let ${\bf L}_0$ be any random vector drown from $\pi$ and ${\bf L}_t$ be a continuous dynamics satisfying \eqref{eqn:continuous_flow_L}. Then for any $s>0$, one has
	\begin{flalign}\label{eqn:main_proof_phi'}
	\sqrt{\E \left[\norm{\nabla \phi({\bf L}_0) - \nabla \phi ({\bf L}_s)}_2^2\right]}
	\le  s\sqrt{MpR} + \sqrt{2spR}.
	\end{flalign}
\end{proposition}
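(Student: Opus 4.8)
The plan is to pass to the mirror coordinates and estimate the drift and diffusion parts of the increment separately. Set ${\bf Z}_t \eqdef \nabla\phi({\bf L}_t)$; since ${\bf L}_t$ satisfies the continuous dynamics \eqref{eqn:continuous_flow_L}, the process ${\bf Z}_t$ satisfies the SDE \eqref{SDE_Y}, which in integral form reads
\[
{\bf Z}_s - {\bf Z}_0 = -\int_0^s \nabla f\circ\nabla\phi^*({\bf Z}_t)\,dt + \int_0^s \sqrt{2[D^2\phi^*({\bf Z}_t)]^{-1}}\,d{\bf B}_t .
\]
First I would take $L^2(\Omega)$-norms of this identity and use the triangle inequality, so that $\sqrt{\E[\norm{\nabla\phi({\bf L}_0)-\nabla\phi({\bf L}_s)}_2^2]}$ is bounded by the sum of the $L^2$-norms of the two integrals. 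Two facts are used repeatedly: the conjugacy identity $[D^2\phi^*(\nabla\phi(\x))]^{-1} = D^2\phi(\x)$ together with $\nabla\phi^* = (\nabla\phi)^{-1}$ give $\nabla f\circ\nabla\phi^*({\bf Z}_t) = \nabla f({\bf L}_t)$ and $[D^2\phi^*({\bf Z}_t)]^{-1} = D^2\phi({\bf L}_t)$; and since ${\bf L}_0\sim\pi$ while $\pi$ is the invariant measure of the dynamics (well-posed under \eqref{assumption:A1}), the process is stationary, ${\bf L}_t\sim\pi$ for all $t\ge 0$.

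For the diffusion part, I would apply It\^o's isometry (Lemma~\ref{lemma:Ito's_isometry}), which turns $\E[\norm{\int_0^s \sqrt{2[D^2\phi^*({\bf Z}_t)]^{-1}}\,d{\bf B}_t}_2^2]$ into $2\int_0^s \E[{\rm Tr}(D^2\phi({\bf L}_t))]\,dt$; then using ${\rm Tr}(D^2\phi(\x))\le p\norm{D^2\phi(\x)}_2$ (valid since $D^2\phi\succeq 0$), stationarity, and \eqref{assumption:A2}, this is at most $2spR$, so the diffusion part contributes $\sqrt{2spR}$.

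For the drift part, I would use Minkowski's integral inequality (Lemma~\ref{lemma:Minkowski_inequality}, applied componentwise after $\lvert\int_0^s(\cdot)\,dt\rvert\le\int_0^s\lvert\cdot\rvert\,dt$) to get $\sqrt{\E[\norm{\int_0^s \nabla f({\bf L}_t)\,dt}_2^2]}\le \int_0^s \sqrt{\E[\norm{\nabla f({\bf L}_t)}_2^2]}\,dt = s\sqrt{\E_{\X\sim\pi}[\norm{\nabla f(\X)}_2^2]}$, the last equality by stationarity. The crucial ingredient is the integration-by-parts identity $\E_{\X\sim\pi}[\norm{\nabla f(\X)}_2^2] = \E_{\X\sim\pi}[{\rm Tr}(D^2 f(\X))]$, which holds because $\nabla\left(\frac{d\pi}{d\x}\right) = -\nabla f\,e^{-f}$ vanishes on $\partial\Xx$ by hypothesis, so the boundary term disappears. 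Since \eqref{assumption:A4} implies $D^2 f(\x)\preceq M D^2\phi(\x)$ (see Appendix~\ref{app:assumptions}), taking traces and using $D^2\phi\succeq 0$ yields ${\rm Tr}(D^2 f(\x))\le M\,{\rm Tr}(D^2\phi(\x))\le Mp\norm{D^2\phi(\x)}_2$, so $\E_{\X\sim\pi}[\norm{\nabla f(\X)}_2^2]\le MpR$ by \eqref{assumption:A2}; hence the drift part contributes $s\sqrt{MpR}$. Adding the two contributions gives \eqref{eqn:main_proof_phi'}.

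The step I expect to be the main obstacle is not the algebra but the probabilistic bookkeeping that makes the above rigorous: one has to know that ${\bf L}_t$ is a genuine stationary solution (which relies on the well-posedness and invariance established under \eqref{assumption:A1}, cf.\ Appendix~\ref{app:sufficient_condition}), that the stochastic integral is a true $L^2$-martingale so that It\^o's isometry is legitimate (finite second moments, guaranteed by \eqref{assumption:A2}), and that the boundary term in the integration by parts really vanishes under the stated condition on $\partial\Xx$. Once these are in place, the remaining estimates reduce to the elementary bound ${\rm Tr}(A)\le p\norm{A}_2$ for positive semidefinite $A$ and the monotonicity of the trace under the Loewner order.
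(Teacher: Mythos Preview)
Your proposal is correct and follows essentially the same route as the paper's proof: split the increment into drift and diffusion by the triangle inequality, handle the diffusion via It\^o's isometry together with ${\rm Tr}(D^2\phi)\le p\norm{D^2\phi}_2$ and stationarity, and handle the drift via Minkowski's integral inequality, stationarity, and the integration-by-parts identity $\E_\pi[\norm{\nabla f}^2]=\E_\pi[{\rm Tr}(D^2 f)]$. The only cosmetic difference is that the paper bounds ${\rm Tr}(D^2 f)\le p\norm{D^2 f}_2\le Mp\norm{D^2\phi}_2$ via the spectral-norm consequence of \eqref{assumption:A4}, whereas you use the Loewner-order consequence $D^2 f\preceq M D^2\phi$ and trace monotonicity; both are recorded in Appendix~\ref{app:assumptions} and lead to the same bound $MpR$.
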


\medskip

{\bf\noindent Proof of Theorem \ref{thm:contractibility}.}

For notation simplicity, we use  $h$, and $\rho$ to represent $h_{k+1}$, and $\rho_{k+1}$, respectively.
Let ${\bf L}_0$ be a random vector drown from $\pi$ such that $W_{2,\phi}^2(\mu_k, \pi) = \E \left[\norm{\nabla\phi({\bf L}_0) -\nabla\phi(\X_k)}_2^2\right]$. Let ${\bf B}_t = \sqrt{t}{\bm \xi}_{k+1}$, independent of $(\X_k, {\bf L}_0)$. Define a stochastic process ${\bf L}$ such that 
\begin{flalign}\label{eqn:continuous_flow_L}
	\nabla\phi({\bf L}_t) = \nabla\phi({\bf L}_0)-\int_0^t\nabla f({\bf L}_s) ds + \sqrt{2} \int_0^t[D^2\phi({\bf L}_s)]^{\frac{1}{2}} d{\bf B}_s.
\end{flalign}
Then, by \eqref{assumption:A1}, $\{{\bf L}_t: t\ge 0\}$ has $\pi$ as its stationary distribution and 
${\bf L}_t \sim \pi$ for all $t>0$. On the other hand, our HRLMC algorithm reads
\begin{flalign}
	\nabla\phi(\X_{k+1}) = \nabla\phi(\X_{k})-h\nabla f(\X_{k}) + \sqrt{2h[D^2\phi(\X_{k})]}{\bm \xi}_{k+1}.
\end{flalign}

Let\begin{flalign*}
	{\bf A}\eqdef& \nabla\phi({\bf L}_0) -\nabla\phi(\X_k) - h (\nabla f({\bf L}_0) - \nabla f(\X_k)),\\
	{\bf C} \eqdef& \int_0^h \left(\nabla f ({\bf L}_0) - \nabla f({\bf L}_s)\right) ds,\\
	{\bf G} \eqdef& \sqrt{2h}\left([D^2\phi({\bf L}_0)]^{\frac{1}{2}} - [D^2\phi(\X_k)]^{\frac{1}{2}}	\right){\bm \xi}_{k+1},\\
	{\bf H} \eqdef& \sqrt{2}\int_0^h \left([D^2\phi({\bf L}_s)]^{\frac{1}{2}} - [D^2\phi({\bf L}_0)]^{\frac{1}{2}}\right) d{\bf B}_s.
\end{flalign*}

By definition of $W_{2,\phi}^2$ and triangular inequality, one has
\begin{flalign}\label{eqn:W2_total}
\begin{aligned}
W_{2,\phi}(\mu_{k+1}, \pi) 
\le& \sqrt{\E\left[\norm{\nabla\phi({\bf L}_h)- \nabla\phi(\X_{k+1})}_2^2\right]}\\
=&	\sqrt{\E \left[ \norm{{\bf A}+{\bf C}+{\bf G}+{\bf H}}_2^2\right]} \\
\le & \sqrt{\E\left[\norm{{\bf A}+{\bf G}}_2^2\right]}+ \sqrt{\E\left[\norm{{\bf C}}_2^2\right]} + \sqrt{\E\left[\norm{{\bf H}}_2^2\right]}.
\end{aligned}
\end{flalign}

Below, we estimate the three terms in the right-hand side separately.

\begin{enumerate}
	\item[1.] 	Define $\rho = \sqrt{ \tau^2+h \kappa^2}$, where
	\begin{flalign*} \tau^2 = 
	\begin{cases}
	(1-mh)^2 +\frac{h\delta(4M+\delta)}{2(m+M)}, \text{ for } h \in (0, \frac{2}{m+M});\\
	(1-Mh)^2+\frac{h\delta(4M +\delta)}{2(m+M)},  \text{ for } h \in (\frac{2}{m+M}, \frac{2}{M}).
	\end{cases}
	\end{flalign*}
	One can check that $\rho <1$ because of $\tilde{ \kappa}^2 < 2m$ and $h  < \min\left(\frac{2m-\tilde{ \kappa}^2}{m^2},\frac{2M-\tilde{ \kappa}^2}{M^2}\right) $.
	Therefore, by Proposition \ref{prop:Baillon-Haddad_extend_weak}, we have 
	\begin{flalign}\label{eqn:main_proof_part_A}
	\begin{aligned}
		\E\left[\norm{{\bf A}}_2^2\right]
	=& \E\bigg[\norm{\nabla\phi({\bf L}_0) -\nabla\phi({\bf X}_k)}_2^2 + h^2\norm{\nabla f({\bf L}_0) - \nabla f({\bf X}_k)}_2^2 \\
	& \ \ \ \  - 2h \langle\nabla f({\bf L}_0) - \nabla f({\bf X}_k), \nabla\phi({\bf L}_0) - \nabla\phi({\bf X}_k)	\rangle\bigg] \\
	\le & \E \bigg[ \left(1-\frac{h(4mM -4M\delta - \delta^2)}{2(m+M)}\right)\norm{\nabla\phi({\bf L}_0) -\nabla\phi({\bf X}_k)}_2^2 \\
	& \ \ \ \ + h\left(h-\frac{2}{m+M}\right)\norm{\nabla f({\bf L}_0) - \nabla f({\bf X}_k)}_2^2 	\bigg] \\
	\le &  \tau^2 W_{2,\phi}^2(\mu_k, \pi).
	\end{aligned}
	\end{flalign}
	
	The last inequality is derived from \eqref{assumption:A4} 
	if $h \in \left(\frac{2}{m+M}, \frac{2}{M}\right)$ or \eqref{assumption:A3} 
	  if $h \in \left(0, \frac{2}{m+M}\right)$.
	
	On the other hand, from It\^{o}'s isometry (Lemma \ref{lemma:Ito's_isometry}) and  assumption \eqref{assumption:A1}, we have
	\begin{flalign}\label{eqn:main_proof_part_G}
	\begin{aligned}
	\E[\norm{{\bf G}}_2^2]
	= & \E\left[ h	\norm{\sqrt{2}\left([D^2\phi({\bf L}_0)]^{\frac{1}{2}} - [D^2\phi({\bf X}_k)]^{\frac{1}{2}}	\right)}_F^2\right]\\
	\le &  h \E\left[	\kappa^2 \norm{\nabla\phi({\bf L}_0) - \nabla\phi({\bf X}_k)}_2^2	\right]\\
	= & h \kappa^2  W_{2,\phi}^2(\mu_k, \pi). 
	\end{aligned}
	\end{flalign}
	
	Note that $\E\left[\langle {\bf A}, {\bf G} \rangle \right] =0$, since ${\bm \xi}_{k+1}$ is independent of $({\bf X}_k, {\bf L}_0)$. Therefore, combining equations~\eqref{eqn:main_proof_part_A} and~\eqref{eqn:main_proof_part_G}, one has 
	\begin{flalign}\label{eqn:main_proof_part_A_plus_G}
	\begin{aligned}
	\sqrt{\E\left[\norm{{\bf A} +{\bf G}}_2^2\right]}  
	= \sqrt{\E\left[\norm{{\bf A}}_2^2 	+ \norm{{\bf G}}_2^2 \right]}
	\le  \sqrt{ \left(\tau^2+h \kappa^2\right)} W_{2,\phi}(\mu_k, \pi)
	=  \rho  W_{2,\phi}(\mu_k, \pi).
	\end{aligned}
	\end{flalign}

\item[2.] Applying Minkowski's integral inequality (Lemma \ref{lemma:Minkowski_inequality}), assumption \eqref{assumption:A4}, and Proposition \ref{prop:main_proof_phi}, 
\begin{align*}
\sqrt{\E\left[\norm{{\bf C}}_2^2\right]}
&\le  \int_0^h \sqrt{\E \left[\norm{\nabla f({\bf L}_0) - \nabla f ({\bf L}_s)}_2^2\right]} ds\\
&\le M \int_0^h \sqrt{\E \left[\norm{\nabla \phi({\bf L}_0) - \nabla \phi ({\bf L}_s)}_2^2\right]} ds \\
&\le M \int_0^h \left(s\sqrt{MpR} + \sqrt{2spR}\right) ds \\
&\le  \frac{7\sqrt{2}}{6}M h^{\frac{3}{2}} p^{\frac{1}{2}}R^{\frac{1}{2}} .	
\end{align*}

\item[3.] By It\^{o}'s isometry, assumption 
\eqref{assumption:A1},  and Proposition \ref{prop:main_proof_phi}, 
\begin{align*}
\E\left[ \norm{{\bf H}}_2^2\right]
&=	\int_0^h \E\left[   \norm{ \sqrt{2} \left([D^2\phi({\bf L}_s)]^{\frac{1}{2}} - [D^2\phi({\bf L}_0)]^{\frac{1}{2}}\right)}_F^2  \right] ds \\
&\le  \kappa^2 \int_0^h \E\left[   \norm{ \nabla\phi({\bf L}_s) - \nabla\phi({\bf L}_0)}_2^2  \right] ds \\
&\le 	\kappa^2 \int_0^h \left( s\sqrt{MpR} + \sqrt{2spR}\right)^2 ds\\
& \le  \kappa^2  h^2 pR \left(1+ \sqrt{\frac{M}{3}}h^{\frac{1}{2}}\right)^2.
\end{align*}
\end{enumerate}

In conclusion, combining \eqref{eqn:W2_total} and the above, we arrive at 
\begin{align*}
W_{2,\phi}(\mu_{k+1}, \pi) 
\le & \sqrt{\E\left[\norm{{\bf A}+{\bf G}}_2^2\right]}+ \sqrt{\E\left[\norm{{\bf C}}_2^2\right]} + \sqrt{\E\left[\norm{{\bf H}}_2^2\right]} \\
&\le   \rho  W_{2,\phi}(\mu_k, \pi) +  \frac{7\sqrt{2}}{6}M h^{\frac{3}{2}} p^{\frac{1}{2}}R^{\frac{1}{2}} + \kappa h p^{\frac{1}{2}}R^{\frac{1}{2}} + \sqrt{\frac{M}{3}}\kappa h^{\frac{3}{2}} p^{\frac{1}{2}}R^{\frac{1}{2}} \\
& =   \rho  W_{2,\phi}(\mu_k, \pi) +  h p^{\frac{1}{2}}\beta_1(R, \kappa) + h^{\frac{3}{2}}p^{\frac{1}{2}} \beta_2(R, M, \kappa).
\end{align*}
\QEDA


\section*{Conclusion}

In this paper, we have proposed the first theoretical guarantees for the discretized Langevin counterpart of the celebrated mirror descent algorithm to sample from densities whose logarithms are not necessarily Lipschitz-smooth or strongly concave. We showed that it is a stable discretization of the continuous Riemannian Langevin flow, more precisely, that it contracts toward a Wasserstein ball associated with a Hessian squared Riemannian metric. 
This analysis highlights the critical role played by the self-concordance of the entropy function and the relative anisotropy of the entropy and log-distribution (controlled by bounding the associated commutator). 

\section*{Acknowledgments} 

The authors are grateful to Arnak Dalalyan, Marco Cuturi, and Paul Rolland for stimulating conversations. They also thank Institut Henri Poincar\'e for 
their hospitality throughout "The Mathematics of Imaging" semester program, during which the collaboration started. The work of the first two authors is supported by the ERC project NORIA.

\bibliographystyle{plainnat}
\bibliography{MirrorLangevinCOLT2020}

\appendix


\bigskip

\section{Well-posedness of \eqref{SDE_Y}}\label{app:sufficient_condition} 
Let us recall the SDE \eqref{SDE_Y},
\begin{flalign*}
d{\bf Y}_t = -\nabla f \circ \nabla \phi^*({\bf Y}_t) dt + \sqrt{2 [D^2 \phi^*({\bf Y}_t)]^{-1}} d{\bf B}_t .
\end{flalign*}
Let $\mathcal{Y} \eqdef \nabla\phi(\Xx)$. The following conditions are usually required for existence and uniqueness of (strong) solutions to this SDE in time interval $[0,T]$ (see \citep[Theorem~5.2.1]{Oksendal03}):
\begin{itemize}
	\item  {\bf Lipschitz condition:} there exists $K_1>0$, such that for all vectors ${\bf y}_1, {\bf y}_2 \in \mathcal{Y}$ (and all $t\in [0,T]$),
	\begin{flalign}\label{eq:lipcond}
	\sqrt{2}\norm{D^2 \phi^*({\bf y}_1)^{-\frac{1}{2}} - D^2 \phi^*({\bf y}_2)^{-\frac{1}{2}}}_F + \norm{\nabla f (\nabla \phi^*({\bf y}_1)) - \nabla f(\nabla \phi^*({\bf y}_2))}_2 \le K_1 \norm{{\bf y}_1 - {\bf y}_2}_2.
	\end{flalign}
	Let ${\bf x}_i = \nabla \phi^*({\bf y}_i)$ for $i =1,2$. Then the above inequality is equivalent to, for all ${\bf x}_1, {\bf x}_2 \in \mathcal{X}$,
	\begin{flalign*}
	\sqrt{2}\norm{D^2 \phi({\bf x}_1)^{\frac{1}{2}} - D^2 \phi({\bf x}_2)^{\frac{1}{2}}}_F + \norm{\nabla f ({\bf x}_1) - \nabla f ({\bf x}_2)}_2 \le K_1 \norm{\nabla \phi({\bf x}_1) - \nabla \phi({\bf x}_2)}_2.
	\end{flalign*}
	In view of assumptions~\eqref{assumption:A1} and~\eqref{assumption:A4}, the Lipschitz condition \eqref{eq:lipcond} holds with $K_1=M+\kappa$.
	\item {\bf Growth condition:}  there exist $K_2 >0$, such that for all ${\bf y} \in \mathcal{Y}$ (and $t \in [0,T]$),
	\begin{flalign}\label{eq:growthcond}
	2\norm{[D^2 \phi^*({\bf y})]^{-\frac{1}{2}}}^2_F + \norm{\nabla f \circ \nabla\phi^*({\bf y})}_2^2 \le K_2 (1+ \norm{{\bf y}}_2^2).
	\end{flalign}
	Similarly, this is equivalent to the existence of $K_2 >0$ such that for all ${\bf x} \in \mathcal{X}$,
	\begin{flalign*}
	2\norm{[D^2 \phi({\bf x})]^{\frac{1}{2}}}^2_F + \norm{\nabla f ({\bf x})}_2^2 \le K_2 (1+ \norm{\nabla \phi({\bf x})}_2^2).
	\end{flalign*}
	Again, owing to~\eqref{assumption:A1} and~\eqref{assumption:A4}, 
	one easily sees that \eqref{eq:growthcond} holds with $K_2$ depending on $M$ and $\kappa$.
\end{itemize}

\begin{remark}
Although the Lipschitz and Growth conditions are general requirements to guarantee the existence and uniqueness of solutions to SDE~\eqref{SDE_Y}, one can easily check that the Lipschitz condition implies the other one.
\end{remark}

\begin{remark}
Examples of entropies $\phi$ verifying for instance \eqref{assumption:A1} are given in the text, e.g., Burg's entropy $\phi(x) = -\log(x)$ on $\R_{++}$. However, this does hold for the Boltzmann-Shannon $\phi(x) = x\log(x)$ on $\R_{++}$.
\end{remark}


\medskip

\section{Assumption \eqref{assumption:A3} v.s. relative strong convexity; and \eqref{assumption:A4} v.s. relative smoothness}\label{app:assumptions}
Throughout, $f$ and $\phi$ are assumed $C^2$ on $\mathcal{X}$. By Cauchy-Schwarz inequality, \eqref{assumption:A3} implies \begin{flalign}\label{eqn:A1_weak}
\exists m \ge 0, \text{ s.t. } m \norm{\nabla \phi({\bf x}) - \nabla\phi({\bf x}')}_2 \le \norm{\nabla f ({\bf x}) - \nabla f({\bf x}')}_2 \ \ \forall {\bf x}, {\bf x}' \in \mathcal{X}.
\end{flalign}

Since $\mathcal{X}$ is open, for any ${\bf x} \in \mathcal{X}$,~\eqref{eqn:A1_weak} and \eqref{assumption:A4} implies that for all ${\bf{z}} \in \RR^p$ and $t$ sufficiently small
\[
m \norm{\nabla\phi({\bf x}+t{\bf z}) - \nabla\phi({\bf x})}_2 \le
\norm{\nabla f({\bf x}+t{\bf z}) -\nabla f ({\bf x})}_2 \le M \norm{\nabla\phi({\bf x}+t{\bf z}) - \nabla\phi({\bf x})}_2.
\]
Dividing by $t$ and passing to the limit as $t \to 0^+$, we get
\[
m \norm{D^2\phi({\bf x}){\bf z}}_2 \le
\norm{D^2 f({\bf x}){\bf z}}_2 \le M \norm{D^2\phi({\bf x}){\bf z}}_2, \quad \forall {\bf x} \in \mathcal{X}, \forall {\bf z} \in \RR^p.
\]

Squaring, this is equivalent to
\begin{equation}\label{eq:relsmoothpds}
m^2 \dotp{(D^2 \phi({\bf x}))^2{\bf z}}{{\bf z}} \le
\dotp{(D^2 f({\bf x}))^2{\bf z}}{{\bf z}} \le M^2 \dotp{(D^2 \phi({\bf x}))^2{\bf z}}{{\bf z}}, \quad \forall {\bf x} \in \mathcal{X}, \forall {\bf z} \in \RR^p ,
\end{equation}
or
\begin{equation}\label{eq:relsmoothsq}
(m D^2 \phi({\bf x}))^2 \preceq (D^2 f({\bf x}))^2 \preceq (M D^2 \phi({\bf x}))^2, \quad \forall {\bf x} \in \mathcal{X}.
\end{equation}
where $\preceq$ is the Loewner order defined by the cone of positive semi-definite matrices. We recall the following lemma due to \citep[Theorem~1]{Stepniak87}.
\begin{lemma}\label{lem:psdsqrt}
	For any positive semidefinite matrices ${\bf A}$ and ${\bf B}$, if ${\bf A}^2 \succeq {\bf B}^2$, then ${\bf A} \succeq {\bf B}$.
\end{lemma}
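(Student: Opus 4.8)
The plan is to prove the statement by reducing it, via a perturbation argument, to the case where $\mathbf{A}$ is positive definite, and then establishing the positive-definite case through a short eigenvector computation. (The lemma is of course the special case $t\mapsto\sqrt{t}$ of the L\"owner--Heinz theorem on operator-monotone functions, which one could cite outright; I prefer the self-contained route.) \textbf{Step 1 (reduction to $\mathbf{A}\succ 0$).} For any $\varepsilon>0$ we have $(\mathbf{A}+\varepsilon\mathbf{I}_p)^2 = \mathbf{A}^2 + 2\varepsilon\mathbf{A} + \varepsilon^2\mathbf{I}_p \succeq \mathbf{A}^2 \succeq \mathbf{B}^2$, and $\mathbf{A}+\varepsilon\mathbf{I}_p\succ 0$. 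So if the claim is known whenever the first matrix is positive definite, it yields $\mathbf{A}+\varepsilon\mathbf{I}_p\succeq\mathbf{B}$ for every $\varepsilon>0$, and letting $\varepsilon\downarrow 0$ gives $\mathbf{A}\succeq\mathbf{B}$. Hence assume from now on that $\mathbf{A}\succ 0$.

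\textbf{Step 2 (spectral reformulation).} Conjugating by $\mathbf{A}^{-1/2}$, the inequality $\mathbf{B}\preceq\mathbf{A}$ is equivalent to $\mathbf{S}\preceq\mathbf{I}_p$, where $\mathbf{S}\eqdef\mathbf{A}^{-1/2}\mathbf{B}\mathbf{A}^{-1/2}$ is symmetric positive semidefinite, i.e.\ to $\lambda_{\max}(\mathbf{S})\le 1$. Since $\mathbf{B}\mathbf{A}^{-1} = \mathbf{A}^{1/2}\mathbf{S}\mathbf{A}^{-1/2}$ is similar to $\mathbf{S}$, the matrix $\mathbf{B}\mathbf{A}^{-1}$ has real nonnegative eigenvalues coinciding with those of $\mathbf{S}$, so it suffices to show that the largest eigenvalue $\lambda$ of $\mathbf{B}\mathbf{A}^{-1}$ satisfies $\lambda\le 1$. \textbf{Step 3 (eigenvector estimate).} Pick $\mathbf{v}\neq 0$ with $\mathbf{B}\mathbf{A}^{-1}\mathbf{v}=\lambda\mathbf{v}$ and set $\mathbf{w}\eqdef\mathbf{A}^{-1}\mathbf{v}\neq 0$, so that $\mathbf{B}\mathbf{w}=\lambda\mathbf{A}\mathbf{w}$. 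Using that $\mathbf{A}$ and $\mathbf{B}$ are symmetric,
\[
\lambda^2\langle\mathbf{A}^2\mathbf{w},\mathbf{w}\rangle = \langle\lambda\mathbf{A}\mathbf{w},\lambda\mathbf{A}\mathbf{w}\rangle = \langle\mathbf{B}\mathbf{w},\mathbf{B}\mathbf{w}\rangle = \langle\mathbf{B}^2\mathbf{w},\mathbf{w}\rangle \le \langle\mathbf{A}^2\mathbf{w},\mathbf{w}\rangle,
\]
the last step being the hypothesis $\mathbf{A}^2\succeq\mathbf{B}^2$. Since $\mathbf{A}\succ 0$ and $\mathbf{w}\neq 0$, $\langle\mathbf{A}^2\mathbf{w},\mathbf{w}\rangle=\norm{\mathbf{A}\mathbf{w}}_2^2>0$, hence $\lambda^2\le 1$ and thus $\lambda\le 1$ (the eigenvalue being nonnegative). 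By Step 2 this proves $\mathbf{B}\preceq\mathbf{A}$.

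The only genuine subtlety lies in Step 2: because $\mathbf{B}\mathbf{A}^{-1}$ is not symmetric one must first justify that its spectrum is real and nonnegative before selecting an eigenvector, which is exactly what the similarity to the symmetric matrix $\mathbf{S}$ provides. The remaining ingredients — the closure-under-limits step in Step 1 and the Cauchy--Schwarz-free computation in Step 3 — are entirely routine, so I do not expect any further obstacle.
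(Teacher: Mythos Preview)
Your proof is correct. The perturbation in Step~1 is valid because $2\varepsilon\mathbf{A}+\varepsilon^2\mathbf{I}_p\succeq 0$; the similarity $\mathbf{B}\mathbf{A}^{-1}=\mathbf{A}^{1/2}\mathbf{S}\mathbf{A}^{-1/2}$ in Step~2 indeed transfers both the (real, nonnegative) spectrum and diagonalizability from the symmetric $\mathbf{S}$, so a real eigenvector $\mathbf{v}$ exists; and the quadratic-form computation in Step~3 goes through verbatim.

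As for comparison with the paper: there is nothing to compare. The paper does not prove this lemma at all---it simply quotes it as \citep[Theorem~1]{Stepniak87} and moves on. Your self-contained argument (essentially the elementary proof of the L\"owner--Heinz inequality for the square root) is therefore strictly more than what the paper offers, and would make the appendix independent of that external reference.
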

Applying this lemma with ${\bf A}=MD^2 \phi({\bf x})$ and ${\bf B}=D^2 f({\bf x})$, and then with ${\bf A}=D^2 f({\bf x})$ and ${\bf B}=mD^2 \phi({\bf x})$, we conclude that \eqref{eq:relsmoothsq} implies 
\begin{equation}\label{eq:relsmooth}
m D^2 \phi({\bf x}) \preceq D^2 f({\bf x}) \preceq M D^2 \phi({\bf x}) , \quad \forall {\bf x} \in \mathcal{X} .
\end{equation}
According to \citep[Proposition~1.(i, ii)]{BauschkeBolteTeboulle17}, \eqref{eq:relsmooth} is equivalent to smoothness and strong convexity of $f$ relatively to $\phi$, as defined in \cite{LuFreundNesterov17}. 

Overall, we have proved the following claim.
\begin{proposition}
Suppose that $f$ and $\phi$ are $C^2(\Xx)$. Then \eqref{assumption:A3} implies $m$-strong relative convexity with respect to $\phi$ and \eqref{assumption:A4} implies $M$-relative smoothness of $f$ with respect to $\phi$, i.e.~\eqref{eq:relsmooth} holds.
\end{proposition}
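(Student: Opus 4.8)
The plan is to pass from the first-order (gradient) hypotheses \eqref{assumption:A3}--\eqref{assumption:A4} to the second-order (Hessian) inequality \eqref{eq:relsmooth} by differentiation, and then to invoke the literature to identify the latter with relative convexity/smoothness. First I would apply the Cauchy--Schwarz inequality to the inner product on the right-hand side of \eqref{assumption:A3}; this converts it into the one-sided norm bound $m\norm{\nabla\phi(\x)-\nabla\phi(\x')}_2 \le \norm{\nabla f(\x)-\nabla f(\x')}_2$ for all $\x,\x'\in\Xx$, which is \eqref{eqn:A1_weak}. Combining this with \eqref{assumption:A4} produces the two-sided estimate
\[
m\norm{\nabla\phi(\x)-\nabla\phi(\x')}_2 \le \norm{\nabla f(\x)-\nabla f(\x')}_2 \le M\norm{\nabla\phi(\x)-\nabla\phi(\x')}_2, \qquad \forall\, \x,\x'\in\Xx .
\]

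Next I would fix $\x\in\Xx$ and a direction ${\bf z}\in\R^p$. Since $\Xx$ is open, $\x+t{\bf z}\in\Xx$ for all sufficiently small $t>0$, so the two-sided estimate applies to the pair $(\x+t{\bf z},\x)$; dividing through by $t$ and letting $t\to 0^+$ — which is legitimate because $\nabla\phi$ and $\nabla f$ are $C^1$, as $\phi,f\in C^2(\Xx)$ — yields $m\norm{D^2\phi(\x){\bf z}}_2 \le \norm{D^2 f(\x){\bf z}}_2 \le M\norm{D^2\phi(\x){\bf z}}_2$ for every ${\bf z}$. Squaring and reading the inequalities as quadratic forms (using that Hessians are symmetric) gives \eqref{eq:relsmoothpds}, equivalently $(mD^2\phi(\x))^2 \preceq (D^2 f(\x))^2 \preceq (MD^2\phi(\x))^2$ in the Loewner order, i.e.\ \eqref{eq:relsmoothsq}. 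I would then invoke Lemma~\ref{lem:psdsqrt} twice — once with $({\bf A},{\bf B})=(MD^2\phi(\x),D^2 f(\x))$ and once with $({\bf A},{\bf B})=(D^2 f(\x),mD^2\phi(\x))$, both being positive semidefinite ($D^2\phi(\x)$ positive definite by Legendreness, $D^2 f(\x)$ positive semidefinite since $f$ is convex here) — to strip the squares and obtain $mD^2\phi(\x)\preceq D^2 f(\x)\preceq MD^2\phi(\x)$ for all $\x\in\Xx$, which is \eqref{eq:relsmooth}. Finally, \citep[Proposition~1.(i,ii)]{BauschkeBolteTeboulle17} states precisely that \eqref{eq:relsmooth} is equivalent to $m$-relative strong convexity together with $M$-relative smoothness of $f$ with respect to $\phi$ in the sense of \cite{LuFreundNesterov17}, which finishes the argument.

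The only genuinely delicate step is the removal of the squares. The pointwise bound $\norm{D^2 f(\x){\bf z}}_2\le M\norm{D^2\phi(\x){\bf z}}_2$ for all ${\bf z}$ controls only the \emph{squared} Hessians, and since $D^2 f(\x)$ and $D^2\phi(\x)$ need not commute, a comparison of norms of matrix--vector products does \emph{not} in general yield a Loewner comparison of the matrices themselves. What bridges the gap is operator monotonicity of $t\mapsto\sqrt{t}$ on positive semidefinite matrices, packaged here as Lemma~\ref{lem:psdsqrt}; this is the one non-mechanical ingredient, while Cauchy--Schwarz, the openness-plus-limit argument, and the appeal to \cite{BauschkeBolteTeboulle17} are routine. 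It is worth noting explicitly that none of these implications reverses: squaring is not order-reversing and the Hessians may fail to commute, so \eqref{eq:relsmooth} does not in general recover \eqref{assumption:A3}--\eqref{assumption:A4}.
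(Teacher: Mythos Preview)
Your proposal is correct and follows essentially the same route as the paper: Cauchy--Schwarz to pass from \eqref{assumption:A3} to \eqref{eqn:A1_weak}, localization via $\x'=\x+t{\bf z}$ and $t\to 0^+$ to reach the Hessian-norm bounds, squaring to obtain \eqref{eq:relsmoothsq}, two applications of Lemma~\ref{lem:psdsqrt} to strip the squares, and the citation of \cite[Proposition~1]{BauschkeBolteTeboulle17} to identify \eqref{eq:relsmooth} with relative strong convexity and smoothness. The only cosmetic remark is that your parenthetical ``$D^2 f(\x)$ positive semidefinite since $f$ is convex here'' is not an explicit hypothesis of the paper; rather it is itself a consequence of \eqref{assumption:A3} (localizing gives $\langle D^2 f(\x){\bf z}, D^2\phi(\x){\bf z}\rangle\ge 0$ for all ${\bf z}$, and a short Lyapunov-type argument using positive definiteness of $D^2\phi(\x)$ then forces $D^2 f(\x)\succeq 0$), but the paper leaves this equally implicit.
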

Observe that the converse implication in Lemma~\ref{lem:psdsqrt} does not hold in general, see \cite{Stepniak87}, and thus \eqref{eq:relsmooth} $\not\Rightarrow$ \eqref{eq:relsmoothsq} in general. In turn assumptions \eqref{assumption:A3} and \eqref{assumption:A4} are strictly stronger than relative smoothness and strong convexity.


\medskip

\section{Proof of a stronger version of Proposition \ref{prop:Baillon-Haddad_extend_weak}, the Baillon-Haddad type inequality}\label{app:Baillon-Haddad_extend}
In this section, we will prove a Baillon-Haddad type inequality, as in Proposition \ref{prop:Baillon-Haddad_extend_weak}, but with weaker assumptions. This inequality serves as an essential step in the proof of Theorem \ref{thm:contractibility}.

In the following, we denote by $\mathcal{M}_{l\times n}$ the space of all matrices that have $l$ rows and $n$ columns and whose entries have real values.

\medskip

\begin{lemma}[{\citep[Example~5.6.6]{HornJohnson12}}]
	For any matrix ${\bf M} \in \mathcal{M}_{l\times n}$, $\norm{{\bf M}}_2 = \max_{{\bf v}\in \R^n} \frac{\norm{{\bf M}{\bf v}}_2}{\norm{{\bf v}}_2}$.
\end{lemma}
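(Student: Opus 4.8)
The plan is to reduce both sides of the claimed identity to the largest eigenvalue of the symmetric positive semi-definite matrix ${\bf M}^T {\bf M}$, exploiting the definition $\norm{{\bf M}}_2 = \sqrt{\lambda_{\max}({\bf M}^T{\bf M})}$ recalled in the Notations. First I would dispose of the trivial case ${\bf M} = {\bf 0}$ and assume ${\bf M}\neq {\bf 0}$, so that the quotient $\norm{{\bf M}{\bf v}}_2/\norm{{\bf v}}_2$ need only be considered over ${\bf v}\neq {\bf 0}$. By homogeneity of degree zero of this quotient, $\sup_{{\bf v}\neq {\bf 0}} \norm{{\bf M}{\bf v}}_2/\norm{{\bf v}}_2 = \sup_{\norm{{\bf v}}_2 = 1}\norm{{\bf M}{\bf v}}_2$; since the unit sphere $\{{\bf v}\in\R^n : \norm{{\bf v}}_2 = 1\}$ is compact and the map ${\bf v}\mapsto \norm{{\bf M}{\bf v}}_2$ is continuous, this supremum is attained, which justifies writing $\max$ in place of $\sup$.

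Next I would pass to squares: for $\norm{{\bf v}}_2 = 1$ one has $\norm{{\bf M}{\bf v}}_2^2 = \langle {\bf M}{\bf v}, {\bf M}{\bf v}\rangle = \langle {\bf v}, {\bf M}^T{\bf M}{\bf v}\rangle$, which is the Rayleigh quotient of the real symmetric matrix ${\bf N}\eqdef {\bf M}^T{\bf M}$. Applying the spectral theorem to ${\bf N}$, one diagonalizes it in an orthonormal eigenbasis $\{{\bf e}_i\}_{i=1}^n$ with eigenvalues $\lambda_1\ge\cdots\ge\lambda_n\ge 0$; expanding ${\bf v} = \sum_i c_i {\bf e}_i$ with $\sum_i c_i^2 = 1$ gives $\langle {\bf v}, {\bf N}{\bf v}\rangle = \sum_i \lambda_i c_i^2 \le \lambda_1 = \lambda_{\max}({\bf N})$, with equality attained at ${\bf v} = {\bf e}_1$. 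Hence $\max_{\norm{{\bf v}}_2 = 1}\norm{{\bf M}{\bf v}}_2^2 = \lambda_{\max}({\bf M}^T{\bf M})$, and taking square roots yields $\max_{{\bf v}\in\R^n}\norm{{\bf M}{\bf v}}_2/\norm{{\bf v}}_2 = \sqrt{\lambda_{\max}({\bf M}^T{\bf M})} = \norm{{\bf M}}_2$.

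I do not anticipate any genuine obstacle: this is the classical identification of the spectral norm with the operator norm induced by the Euclidean norm. The only point requiring an external input is the Rayleigh--Ritz characterization of the top eigenvalue, which rests on the spectral theorem for real symmetric matrices; everything else is homogeneity together with a compactness argument ensuring that the maximum is actually attained.
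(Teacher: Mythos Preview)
Your argument is correct and is precisely the standard derivation: reduce to the unit sphere by homogeneity, use compactness to get attainment, then identify $\norm{{\bf M}{\bf v}}_2^2$ with the Rayleigh quotient of ${\bf M}^T{\bf M}$ and invoke the spectral theorem to obtain $\lambda_{\max}({\bf M}^T{\bf M})$.

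There is no comparison to make with the paper's proof, because the paper does not prove this lemma at all; it is stated with a citation to \cite[Example~5.6.6]{HornJohnson12} and used as a black box (specifically to deduce the submultiplicativity $\norm{{\bf M}_1{\bf M}_2}_2 \le \norm{{\bf M}_1}_2\norm{{\bf M}_2}_2$ in the subsequent remark). Your proof is exactly the kind of argument one finds in that reference, so nothing is missing.
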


\medskip

\begin{remark}
	From the above lemma, it is clear that $\norm{{\bf M}_1{\bf M}_2}_2 \le \norm{{\bf M}_1}_2 \norm{{\bf M}_2}_2$ for any ${\bf M}_1 \in \mathcal{M}_{k\times l}$ and ${\bf M}_2 \in \mathcal{M}_{l\times n}$.
\end{remark}

\medskip

\begin{definition}[Contractibility] We say a domain $\mathcal{U} \subset \R^p$ is contractible if there exists some point ${\bf c} \in \mathcal{U}$ such that
	the constant map ${\bf x}\mapsto {\bf c}$ is homotopic to the identity map on $\mathcal{U}$.
\end{definition}

\medskip

\begin{definition}[Differential Forms]
	Let $0 \le k \le p$. A differential $k$-form $g: \mathcal{U} \rightarrow \Lambda^{k}$ will be written as $g = \sum_{1\le i_1 < \cdots < i_k \le p} g_{i_1\cdots i_k} dx^{i_1}\wedge \cdots \wedge dx^{i_k}$, where $g_{i_1\cdots i_k}: \mathcal{U} \rightarrow \R$ for every $1\le i_1 < \cdots < i_k \le p$ and $\Lambda^{k} = \Lambda^{k}\left({\R^{p}}^*\right)$ with ${\R^{p}}^*$ being the dual of $\R^{p}$ as a vector space. When $g_{i_1\cdots i_k} \in C^r(\mathcal{U})$ for every $1\le i_1 < \cdots < i_k \le p$, we will write $g \in C^r(\mathcal{U}; \Lambda^{k})$.
\end{definition}

\medskip

\begin{lemma}[Poincar\'{e} lemma, {\citep[Theorem~8.1]{CsatoDacorognaKneuss12}}]
	Let $r\ge 1$ and $0 \le k \le p-1 $ be integers and $\mathcal{U} \subset \R^p$ be 
	an open contractible set.
	Let $g\in C^r(\mathcal{U};\Lambda^{k+1})$ with $dg = 0$ in $\mathcal{U}$. Then there exists $G \in C^r(\mathcal{U};\Lambda^{k})$ such that $dG = g$ in $\mathcal{U}$.
\end{lemma}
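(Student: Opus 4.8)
The plan is to prove this by exhibiting an explicit \emph{homotopy (chain) operator}, the standard device behind the Poincar\'e lemma. Using the contractibility hypothesis, I would first fix a point ${\bf c}\in\mathcal{U}$ and a homotopy $H:\mathcal{U}\times[0,1]\to\mathcal{U}$ with $H(\cdot,0)\equiv {\bf c}$ and $H(\cdot,1)=\mathrm{id}_{\mathcal{U}}$. The goal is then to construct a linear operator $\mathcal{K}:C^r(\mathcal{U};\Lambda^{k+1})\to C^r(\mathcal{U};\Lambda^{k})$ such that $d\circ\mathcal{K}+\mathcal{K}\circ d=\mathrm{Id}$ on forms of degree $k+1\ge1$ — the contribution of the constant endpoint dropping out because a constant map has vanishing differential. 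Once this is in hand, a closed form $g$ (i.e.\ $dg=0$) satisfies $g=d(\mathcal{K}g)$, so $G:=\mathcal{K}g$ is the primitive we want, and it lies in $C^r$ since $g$ does.

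To build $\mathcal{K}$, I would pull $g$ back by $H$ to a $(k+1)$-form on $\mathcal{U}\times[0,1]$ and split it in the variables $(x,t)$ as $H^*g = dt\wedge a_t + b_t$, with $a_t\in C^r(\mathcal{U};\Lambda^{k})$ and $b_t\in C^r(\mathcal{U};\Lambda^{k+1})$ depending on $t$, and then set $(\mathcal{K}g)(x):=\int_0^1 a_t(x)\,dt$. Decomposing $d=d_x+dt\wedge\partial_t$ on the product and using $0=H^*(dg)=d(H^*g)=d_x b_t+dt\wedge(\partial_t b_t-d_x a_t)$, the $dt$-component yields the key identity $d_x a_t=\partial_t b_t$. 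Differentiating under the integral sign and using the fundamental theorem of calculus, I get $d(\mathcal{K}g)=\int_0^1 d_x a_t\,dt=\int_0^1\partial_t b_t\,dt=b_1-b_0$; since $b_t=H(\cdot,t)^*g$, this equals $\mathrm{id}^*g - H(\cdot,0)^*g = g-0 = g$, the last term vanishing because $\deg g = k+1\ge1$ and $H(\cdot,0)$ is constant. (A parallel bookkeeping of the $dt$-free part gives the full $d\mathcal{K}+\mathcal{K}d=\mathrm{Id}$, but only the closed case is needed here.)

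The hard part will be the regularity of the homotopy: ``contractible'' is a purely topological assumption, so the homotopy $H$ it provides is only continuous a priori, whereas the computation above needs $H$ at least $C^1$ in $t$, $C^r$ in $x$, and jointly regular enough to differentiate under the integral. I would handle this by a relative smoothing argument — mollify $H$ via a locally finite partition of unity to a $C^\infty$ map $\widetilde H:\mathcal{U}\times[0,1]\to\R^p$, chosen $C^0$-close enough that it still takes values in the open set $\mathcal{U}$, and reparametrize near $t=0,1$ so that $\widetilde H(\cdot,0)\equiv{\bf c}$ and $\widetilde H(\cdot,1)=\mathrm{id}$ are preserved; then $\widetilde H$ is again a homotopy between the same two maps, and running the construction with $\widetilde H$ gives $G=\mathcal{K}g\in C^r$ since $g\in C^r$ and the auxiliary factors are smooth. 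In the cases actually used in this paper $\mathcal{U}$ is star-shaped (an interval, box, or ball), where this difficulty disappears: one takes the real-analytic $H(x,t)={\bf c}+t(x-{\bf c})$, for which $\mathcal{K}$ becomes the classical integral formula involving $\int_0^1 t^{k}\,g({\bf c}+t(x-{\bf c}))\,dt$ contracted against $x-{\bf c}$, and the lemma reduces to the direct verification sketched above.
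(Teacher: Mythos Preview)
The paper does not give its own proof of this lemma; it is quoted verbatim from \cite[Theorem~8.1]{CsatoDacorognaKneuss12} and used as a black box in the proof of the Baillon--Haddad extension. Your proposal is the standard homotopy-operator argument and is correct as sketched. The one genuinely delicate point is exactly the one you flag: upgrading a merely continuous contraction to a $C^\infty$ one with the same endpoints and values still in the open set $\mathcal{U}$. Your mollification-plus-reparametrization outline is the right idea, but making it fully rigorous on an arbitrary open contractible set (not just a star-shaped one) is where most of the technical content of the cited reference lies; since the paper outsources this entirely, there is nothing further to compare.
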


\medskip
\begin{remark}
	For relaxation on the contractibility of the domain and sharper regularity in H\"{o}lder spaces, see \cite[Theorem~8.3]{CsatoDacorognaKneuss12}. 
\end{remark}

\medskip

\begin{proposition}[Baillon-Haddad extension]\label{prop:Baillon-Haddad_extend_strong}
	Assume that $\mathcal{X}$ is contractible, $\phi$ is a Legendre function on $\mathcal{X}$, $f$ and $\phi\in C^{3}(\Xx)$ satisfying \eqref{assumption:A5}, and that there exist $0 \le m \le M$ such that for any ${\bf x}_1, {\bf x}_2 \in \mathcal{X}$,
	\begin{flalign}
	m\norm{\nabla\phi({\bf x}_1)-\nabla\phi({\bf x}_2)}_2^2 
	\le \langle \nabla f({\bf x}_1) - \nabla f({\bf x}_2), \nabla\phi({\bf x}_1) -\nabla\phi({\bf x}_2) \rangle \le M \norm{\nabla\phi({\bf x}_1)- \nabla\phi({\bf x}_2)}_2^2.
	\end{flalign}
	Then for all ${\bf x}_1, {\bf x}_2 \in \mathcal{X}$, we have
	\begin{flalign}\label{Baillon-Haddad_extend_0}
	\begin{aligned}
	&\langle\nabla f({\bf x}_1) - \nabla f({\bf x}_2), \nabla\phi({\bf x}_1) - \nabla\phi({\bf x}_2)\rangle \\
	\ge & \frac{1}{m+M}\norm{\nabla f({\bf x}_1) - \nabla f({\bf x}_2)}_2^2 + \frac{4mM -4M\delta - \delta^2}{4(m+M)}\norm{\nabla\phi({\bf x}_1) - \nabla\phi({\bf x}_2)}_2^2. 
	\end{aligned}
	\end{flalign}
\end{proposition}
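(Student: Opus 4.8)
\emph{Overall strategy.} The plan is to transport the inequality to the dual domain $\mathcal{Y}\eqdef\nabla\phi(\mathcal{X})$ and there reduce it to the classical Baillon--Haddad theorem, after peeling off the ``rotational'' part of the transported gradient. First I would set ${\bf y}_i\eqdef\nabla\phi({\bf x}_i)$ (so ${\bf x}_i=\nabla\phi^*({\bf y}_i)$) and note that, since $\nabla\phi$ is a diffeomorphism by Legendreness and $\mathcal{X}$ is open and contractible, $\mathcal{Y}$ is open and contractible as well. Introduce ${\bf F}\eqdef\nabla f\circ\nabla\phi^*\in C^2(\mathcal{Y};\R^p)$ and write ${\bf A}({\bf y})\eqdef D^2 f(\nabla\phi^*({\bf y}))$, ${\bf B}({\bf y})\eqdef D^2\phi^*({\bf y})$; both are symmetric, ${\bf B}({\bf y})=[D^2\phi(\nabla\phi^*({\bf y}))]^{-1}$, and by the chain rule $\nabla{\bf F}={\bf A}{\bf B}$. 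Under the change of variables the hypothesis becomes the two-sided monotonicity $m\norm{{\bf y}_1-{\bf y}_2}_2^2\le\langle{\bf F}({\bf y}_1)-{\bf F}({\bf y}_2),{\bf y}_1-{\bf y}_2\rangle\le M\norm{{\bf y}_1-{\bf y}_2}_2^2$ on $\mathcal{Y}$, and the target \eqref{Baillon-Haddad_extend_0} becomes the cocoercivity-type bound $\langle{\bf F}({\bf y}_1)-{\bf F}({\bf y}_2),{\bf y}_1-{\bf y}_2\rangle\ge\tfrac1{m+M}\norm{{\bf F}({\bf y}_1)-{\bf F}({\bf y}_2)}_2^2+\tfrac{4mM-4M\delta-\delta^2}{4(m+M)}\norm{{\bf y}_1-{\bf y}_2}_2^2$.

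\emph{Helmholtz-type splitting.} Next I would decompose ${\bf F}=\nabla\tilde f+{\bf g}$ on $\mathcal{Y}$, with $\tilde f\in C^3(\mathcal{Y})$ and ${\bf g}\in C^2(\mathcal{Y};\R^p)$ having anti-symmetric Jacobian $\nabla{\bf g}=\tfrac12({\bf A}{\bf B}-{\bf B}{\bf A})=\tfrac12[{\bf A},{\bf B}]$, the anti-symmetric part of $\nabla{\bf F}={\bf A}{\bf B}$. To produce ${\bf g}$, observe that the $2$-form with coefficients $[{\bf A},{\bf B}]_{ij}=({\bf A}{\bf B})_{ij}-({\bf A}{\bf B})_{ji}$ equals, up to sign, $d\big(\sum_i{\bf F}_i\,dy_i\big)$, hence is closed; the Poincar\'e lemma on the contractible $\mathcal{Y}$ then delivers ${\bf g}$ with the prescribed anti-symmetric Jacobian. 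Since then $\nabla({\bf F}-{\bf g})=\tfrac12({\bf A}{\bf B}+{\bf B}{\bf A})$ is symmetric, a second application of the Poincar\'e lemma (equivalently Stokes--Cartan / path-independence) yields $\tilde f$ with $\nabla\tilde f={\bf F}-{\bf g}$. Two facts will then be in hand: anti-symmetry of $\nabla{\bf g}$ gives $\langle{\bf g}({\bf y}_1)-{\bf g}({\bf y}_2),{\bf y}_1-{\bf y}_2\rangle=0$; and since ${\bf B}({\bf y})=[D^2\phi({\bf x})]^{-1}$ and ${\bf A}({\bf y})=D^2 f({\bf x})$ at ${\bf x}=\nabla\phi^*({\bf y})$, assumption \eqref{assumption:A5} gives $\norm{\nabla{\bf g}({\bf y})}_2=\tfrac12\norm{[{\bf A}({\bf y}),{\bf B}({\bf y})]}_2\le\tfrac{\delta}{2}$, so $\norm{{\bf g}({\bf y}_1)-{\bf g}({\bf y}_2)}_2\le\tfrac{\delta}{2}\norm{{\bf y}_1-{\bf y}_2}_2$ upon integrating along the segment $[{\bf y}_2,{\bf y}_1]$.

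\emph{Reduction and $\delta$-bookkeeping.} Then I would combine these ingredients. From $\langle{\bf g}({\bf y}_1)-{\bf g}({\bf y}_2),{\bf y}_1-{\bf y}_2\rangle=0$ the two-sided monotonicity of ${\bf F}$ transfers verbatim to $\nabla\tilde f$, and letting ${\bf y}_2\to{\bf y}_1$ (using $\tilde f\in C^2$) gives $m{\bf I}_p\preceq D^2\tilde f({\bf y})\preceq M{\bf I}_p$, i.e. $\tilde f$ is $m$-strongly convex with $M$-Lipschitz gradient. Applying the classical sharp Baillon--Haddad inequality to $\tilde f$, substituting $\nabla\tilde f={\bf F}-{\bf g}$, expanding $\norm{{\bf F}({\bf y}_1)-{\bf F}({\bf y}_2)}_2^2$ in terms of $\nabla\tilde f$ and ${\bf g}$, and bounding the cross term by $\langle\nabla\tilde f({\bf y}_1)-\nabla\tilde f({\bf y}_2),{\bf g}({\bf y}_1)-{\bf g}({\bf y}_2)\rangle\le\tfrac{M\delta}{2}\norm{{\bf y}_1-{\bf y}_2}_2^2$ and $\norm{{\bf g}({\bf y}_1)-{\bf g}({\bf y}_2)}_2^2\le\tfrac{\delta^2}{4}\norm{{\bf y}_1-{\bf y}_2}_2^2$ (both via the $\delta/2$-Lipschitz bound on ${\bf g}$ and $\norm{\nabla\tilde f({\bf y}_1)-\nabla\tilde f({\bf y}_2)}_2\le M\norm{{\bf y}_1-{\bf y}_2}_2$) yields the cocoercivity-type bound of the first paragraph. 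Undoing the change of variables via ${\bf F}({\bf y}_i)=\nabla f({\bf x}_i)$ and ${\bf y}_i=\nabla\phi({\bf x}_i)$ then gives \eqref{Baillon-Haddad_extend_0}.

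\emph{Main obstacle.} The delicate step is the Helmholtz-type splitting: one must produce, on the contractible domain $\mathcal{Y}$, a vector field ${\bf g}$ whose Jacobian is \emph{exactly} the anti-symmetric matrix field $\tfrac12[{\bf A},{\bf B}]$ — the Poincar\'e lemma applied to the associated closed $2$-form only constrains the anti-symmetric part of $\nabla{\bf g}$, so one must either exhibit a primitive with purely anti-symmetric Jacobian or reroute the estimate to absorb the leftover symmetric part — and then integrate ${\bf F}-{\bf g}$ to a $C^3$ potential; the $C^3$ regularity of $f$ and $\phi$ and the contractibility inherited from $\mathcal{X}$ are precisely what makes this legitimate and ensures $D^2\tilde f$ is continuous so that the Loewner bounds $m{\bf I}_p\preceq D^2\tilde f\preceq M{\bf I}_p$ are meaningful. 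Everything downstream is the classical Baillon--Haddad estimate plus the elementary $\delta$-perturbation bookkeeping. Note that, in contrast with Proposition~\ref{prop:Baillon-Haddad_extend_weak}, neither \eqref{assumption:A4} nor a Cauchy--Schwarz step enters: the two-sided monotonicity hypothesis feeds directly into $m{\bf I}_p\preceq D^2\tilde f\preceq M{\bf I}_p$, which is exactly why the present hypotheses are weaker.
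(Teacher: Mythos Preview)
Your proposal is essentially the paper's own argument: transport to the dual domain $\mathcal{Y}$, split $\nabla f\circ\nabla\phi^*=\nabla\tilde f+{\bf g}$ via the Poincar\'e lemma with $\nabla{\bf g}=\tfrac12[{\bf A},{\bf B}]$, use $\langle{\bf g}({\bf y}_1)-{\bf g}({\bf y}_2),{\bf y}_1-{\bf y}_2\rangle=0$ to transfer the two-sided monotonicity to $\tilde f$ and read off $m{\bf I}_p\preceq D^2\tilde f\preceq M{\bf I}_p$, apply the classical Baillon--Haddad inequality to $\tilde f$, and then bound the cross term and $\norm{{\bf g}({\bf y}_1)-{\bf g}({\bf y}_2)}_2^2$ via \eqref{assumption:A5}; your $\tfrac{M\delta}{2}$ and $\tfrac{\delta^2}{4}$ bounds coincide with the paper's double-integral estimates. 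You are right to single out the Helmholtz-type splitting as the delicate step --- the paper likewise asserts $\partial_i g_j=\tfrac12[({\bf A}{\bf B})_{ji}-({\bf A}{\bf B})_{ij}]$ directly after invoking Poincar\'e, without separately arguing that a primitive with \emph{purely} anti-symmetric Jacobian exists.
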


\medskip

\begin{remark}
	1.  Under the same assumptions as above and assuming $D^2\phi$ and $D^2 f$ are commutable, then for any ${\bf x}_1, {\bf x}_2 \in \mathcal{X}$,
	\begin{flalign}\label{Baillon-Haddad_extend_2}
	\begin{aligned}
	&\langle\nabla f({\bf x}_1) - \nabla f({\bf x}_2), \nabla\phi({\bf x}_1) - \nabla\phi({\bf x}_2)\rangle \\
	\ge & \frac{1}{m+M}\norm{\nabla f({\bf x}_1) - \nabla f({\bf x}_2)}_2^2 + \frac{mM}{m+M}\norm{\nabla\phi({\bf x}_1) - \nabla\phi({\bf x}_2)}_2^2. 
	\end{aligned}
	\end{flalign}
	2. If, in addition, $m=0$, then the inequality becomes
	\begin{flalign}\label{Baillon-Haddad_extend_3}
	\langle\nabla f({\bf x}_1) - \nabla f({\bf x}_2), \nabla\phi({\bf x}_1) - \nabla\phi({\bf x}_2)\rangle 
	\ge \frac{1}{M}\norm{\nabla f({\bf x}_1) - \nabla f({\bf x}_2)}_2^2. 
	\end{flalign}
	This is the canonical form of Baillon-Haddad inequality, which is equivalent to equation ~\eqref{Baillon-Haddad_extend_2}.\\
	3. In general, if $m=0$ (but $\delta$ may not), the inequality~\eqref{Baillon-Haddad_extend_0} implies relative Lipschitz smoothness
	\begin{flalign}\label{Baillon-Haddad_extend_4}
	\norm{\nabla f({\bf x}_1) - \nabla f({\bf x}_2)}_2 \le \left(M + \frac{\delta}{2}\right)\norm{\nabla\phi({\bf x}_1) - \nabla\phi({\bf x}_2)}_2. 
	\end{flalign}
\end{remark}

\medskip

\begin{proof}[Proof of Proposition \ref{prop:Baillon-Haddad_extend_strong}]
	Denote ${\bf A}({\bf y}): = D^2 f(\nabla\phi^*({\bf y}))$ and ${\bf B}({\bf y}): = D^2 \phi^*({\bf y})$.

	Notice that 
	\begin{flalign*}
		&d\left(\frac{1}{2}\sum_{i,j} \left[ ({\bf A}{\bf B})_{ji} - ({\bf A}{\bf B})_{ij} \right] dy_i \wedge dy_j\right)\\
	= & \sum_{i,j,l} \frac{1}{2}d\left(  \partial_{jl} f(\nabla\phi^*)  \partial_{li}\phi^* - \partial_{il} f(\nabla\phi^*)  \partial_{lj}\phi^* 	\right) \wedge dy_i \wedge dy_j\\
	= & \sum_{i,j,k,l} \frac{1}{2}\left( \partial_{jl}f(\nabla\phi^*)\partial_{lik}\phi^* -\partial_{il}f(\nabla\phi^*)\partial_{ljk}\phi^*  +\sum_{m} \partial_{jlm}f(\nabla\phi^*)\partial_{mk}\phi^*\partial_{li}\phi^* -\right.\\
	& \ \ \ \ \ \ \ \ \ \ \ \ \ \ \ \ \ \ \left.  -\sum_{m}\partial_{ilm}f(\nabla\phi^*) \partial_{mk}\phi^*\partial_{lj}\phi^* \right) d y_k\wedge dy_i\wedge dy_j\\
	= & \sum_{i,j,k,l}\frac{1}{6} \cdot 0 \ d y_k\wedge dy_i\wedge dy_j + \sum_{i,j,k,l,m}\frac{1}{6} \cdot 0\ d y_k\wedge dy_i\wedge dy_j\\
	= & 0.
	\end{flalign*}
	
	By the Poincar\'{e} lemma, there exists a $1$-form $\omega$ on $\mathcal{Y}$ such that
	\begin{flalign}\label{omega_tilde}
	d \omega = \frac{1}{2}\sum_{i,j} \left[ ({\bf A}{\bf B})_{ji} - ({\bf A}{\bf B})_{ij} \right] dy_i \wedge dy_j
	\end{flalign} 
	
	Note that $\omega$ is a $1$-form on $\mathcal{Y}$, which corresponds to a vector field ${\bf g}:\mathcal{Y}\rightarrow \R^p$ such that $\omega = {\bf g} \cdot d{\bf y}$. Define $ {\bf \tilde{g}}: = \nabla f\circ\nabla\phi^* -{\bf g} : \mathcal{Y} \rightarrow \R^p$.

	By Stokes-Cartan theorem, for any $\mathcal{U} \subset \mathcal{Y}$,  one has
	\begin{flalign*}
	 \begin{aligned}
	 \int_{\partial \mathcal{U}} \nabla f\circ \nabla \phi^* \cdot d{\bf y} 
	 = & \int_{\mathcal{U}} d\left(\sum_{j = 1} \partial_j f(\nabla \phi^*) d y_j\right) 
	 \\
	 =&  \frac{1}{2}\int_{\mathcal{U}} \sum_{i,j=1}^p \left[ ({\bf A}{\bf B})_{ji} - ({\bf A}{\bf B})_{ij} \right] dy_i \wedge dy_j\\
	 = & \int_{\mathcal{U}} d\omega = \int_{\partial \mathcal{U}} \omega
	 =  \int_{\partial \mathcal{U}} {\bf g}\cdot d{\bf y}.
	 \end{aligned}
	\end{flalign*}

	This implies, for any closed curve $\Gamma$ on $\mathcal{Y}$, one has $$\oint_{\Gamma} {\bf \tilde{g}}\cdot d{\bf y} =0.$$ 
	That is, ${\bf \tilde{g}}$ is path-independent. Define $\tilde{f}$ as a function on $\mathcal{Y}$ from any given point ${\bf y}_0\in \mathcal{Y}$ such that $\tilde{f}({\bf y}) := \tilde{f}({\bf y}_0) +\int_{\Gamma} {\bf \tilde{g}} \cdot d{\bf y}$,
	where $\Gamma$ is any smooth curve from ${\bf y}_0$ to ${\bf y}$. Therefore, 
	\begin{flalign}\label{eqn:nablatilde{f}}
		\nabla \tilde{f} = {\bf \tilde{g}} = \nabla f\circ \nabla \phi^* - {\bf g}.
	\end{flalign}
	
	From~\eqref{omega_tilde}, we know
	$\partial_i g_j = \frac{1}{2} \left[ ({\bf A}{\bf B})_{ji} - ({\bf A}{\bf B})_{ij} \right], \text{ for all } 1 \le i,j \le p.$
	Thus, \eqref{eqn:nablatilde{f}} implies
	\begin{flalign*}
	(D^2 \tilde{f})_{ji} = & \partial_i \partial_j \tilde{f} = \partial_i (\partial_j f(\nabla\phi^*) - g_j) 
	= \sum_{k} \partial_{jk}f(\nabla\phi^*)\cdot\partial_{ki}\phi^* - \partial_i g_j \\
	= & ({\bf B}{\bf A})_{ij} + \frac{1}{2}[({\bf A}{\bf B})_{ij} - ({\bf B}{\bf A})_{ij}] = \frac{1}{2}[({\bf A}{\bf B})_{ij} + ({\bf B}{\bf A})_{ij}].
	\end{flalign*}
	This shows that $D^2 \tilde{f}$ is symmetric and \begin{flalign}\label{eqn:D2tilde{f}}
		D^2 \tilde{f} = \frac{1}{2}({\bf A}{\bf B}+ {\bf B}{\bf A}) = \frac{1}{2}\left(D^2 f\circ \nabla\phi^* \cdot D^2\phi^* + D^2 \phi^* D^2 f\circ\nabla \phi^*  \right).
	\end{flalign}
	
	By assumption, there exist $0 \le m \le M$ such that for any ${\bf x}_1, {\bf x}_2 \in \mathcal{X}$,
	\begin{flalign*}
	m\norm{\nabla\phi({\bf x}_1)-\nabla\phi({\bf x}_2)}_2^2 \le \langle \nabla f({\bf x}_1) - \nabla f({\bf x}_2), \nabla\phi({\bf x}_1) -\nabla\phi({\bf x}_2) \rangle \le M \norm{\nabla\phi({\bf x}_1)- \nabla\phi({\bf x}_2)}_2^2.
	\end{flalign*}
	This implies for any ${\bf y}_1, {\bf y}_2\in \mathcal{Y}$,
	\begin{flalign*}
	m\norm{{\bf y}_1 - {\bf y}_2}_2^2 \le \langle \nabla f(\nabla\phi^*({\bf y}_1)) - \nabla f(\nabla\phi^*({\bf y}_2)), {\bf y}_1 - {\bf y}_2 \rangle \le M \norm{{\bf y}_1 - {\bf y}_2}_2^2.
	\end{flalign*}
	Thus, for any ${\bf v} \in \R^p$ and ${\bf y} \in \mathcal{Y}$, one has 
	\begin{flalign*}
	m \norm{{\bf v}}_2^2 \le {\bf v}^T \frac{[D(\nabla f \circ \nabla\phi^*)({\bf y})]^T + [D(\nabla f \circ \nabla\phi^*)({\bf y})]}{2} {\bf v} \le M \norm{{\bf v}}_2^2.
	\end{flalign*}
	This reads, from 
	\eqref{eqn:D2tilde{f}}, $$m {\bf I}_p \preceq D^2 \tilde{f} ({\bf y}) \preceq M {\bf I}_p$$ 
	for all ${\bf y}\in \mathcal{Y}$. By the classical Baillon-Haddad theorem, we know
	\begin{flalign}\label{eqn:BH_inequality}
	\langle\nabla\tilde{f}({\bf y}_1) - \nabla\tilde{f}({\bf y}_2), {\bf y}_1 - {\bf y}_2\rangle \ge \frac{1}{m+M}\norm{\nabla \tilde{f}({\bf y}_1) - \nabla \tilde{f}({\bf y}_2)}_2^2 + \frac{mM}{m+M}\norm{{\bf y}_1 - {\bf y}_2}_2^2.
	\end{flalign}
	
	Now let us estimate $\langle {\bf g}({\bf y}_1)  - {\bf g}({\bf y}_2), {\bf y}_1 - {\bf y}_2 \rangle$, $\langle\nabla \tilde{ f}({\bf y}_1) - \nabla \tilde{ f}({\bf y}_2), {\bf g}({\bf y}_1)- {\bf g}({\bf y}_2)\rangle$, and $\norm{{\bf g}({\bf y}_1)-{\bf g}({\bf y}_2)}_2^2$.
	
	\begin{enumerate}
		\item[1.] 	For any ${\bf y}_1, {\bf y}_2 \in \mathcal{Y}$, and any $t, s\in [0,1]$, denote ${\bf y}_t = t {\bf y}_1 + (1-t){\bf y}_2$ and ${\bf y}_s = s{\bf y}_1 + (1-s){\bf y}_2$. Then ${\bf g}({\bf y}_1) - {\bf g}({\bf y}_2) = \int_0^1 d\left({\bf g}({\bf y}_t)\right) = \int_0^1 \nabla {\bf g} ({\bf y}_t) \cdot ({\bf y}_1 - {\bf y}_2) dt$. Since $\nabla {\bf g}({\bf y}_t)$ is anti-symmetric,
		\begin{flalign}\label{eqn:BH_part1}
		\begin{aligned}
		\langle {\bf g}({\bf y}_1)  - {\bf g}({\bf y}_2), {\bf y}_1 - {\bf y}_2 \rangle 
		=& \int_0^1 ({\bf y}_1 - {\bf y}_2)^T [\nabla {\bf g}({\bf y}_t)]^T ({\bf y}_1 - {\bf y}_2) dt \\
		=& \frac{1}{2} \int_0^1 ({\bf y}_1 - {\bf y}_2)^T [(\nabla {\bf g}({\bf y}_t))^T+ \nabla {\bf g}({\bf y}_t)] ({\bf y}_1 - {\bf y}_2) dt =0.
		\end{aligned}
		\end{flalign}
		
		\item[2.] 
		As follows, for any $t\in [0,1]$, let 
		${\bf C}(t):  = D^2 f(\nabla\phi^*({\bf y}_t))D^2\phi^*({\bf y}_t) ={\bf A}({\bf y}_t){\bf B}({\bf y}_t)$. Then, by assumption, $\norm{{\bf C}(t)^T - {\bf C}(t)}_2 \le \delta$ for all $t\in [0,1]$.
		
		Therefore,
		\begin{flalign*}
		&\langle\nabla \tilde{ f}({\bf y}_1) - \nabla \tilde{ f}({\bf y}_2), {\bf g}({\bf y}_1)- {\bf g}({\bf y}_2)\rangle\\
		= & \sum_{l=1}^p(\partial_l \tilde{ f}({\bf y}_1) - \partial_l \tilde{ f}({\bf y}_2))\cdot(g_l ({\bf y}_1) - g_l({\bf y}_2))\\
		= & \sum_{l=1}^p \int_0^1 d(\partial_l \tilde{ f}({\bf y}_t))\cdot \int_0^1 d(g_{l}({\bf y}_s))\\
		= & \sum_{l = 1}^p  \int_0^1 \sum_{i} \partial_{il}\tilde{ f}({\bf y}_t)\cdot({\bf y}_1-{\bf y}_2)_i dt\cdot \int_0^1 \sum_{j} \partial_{j} g_l ({\bf y}_s) \cdot ({\bf y}_1 - {\bf y}_2)_j ds\\
		= & \int_0^1 \int_0^1 \sum_{i,j,l} ({\bf y}_1 - {\bf y}_2)_i \cdot \partial_{il}\tilde{ f}({\bf y}_t)\cdot \partial_j g_l  ({\bf y}_s) \cdot ({\bf y}_1 - {\bf y}_2)_j ds dt\\
		= & \int_0^1 \int_0^1 \sum_{i,j,l} ({\bf y}_1 - {\bf y}_2)_i \cdot \frac{\left({\bf A}({\bf y}_t){\bf B}({\bf y}_t)+ {\bf B}({\bf y}_t){\bf A}({\bf y}_t)\right)_{il}}{2}\\
		&\hspace{3cm}\cdot \frac{({\bf A}({\bf y}_s){\bf B}({\bf y}_s)-{\bf B}({\bf y}_s){\bf A}({\bf y}_s))_{lj}}{2} \cdot ({\bf y}_1 - {\bf y}_2)_j ds dt\\
		= & \frac{1}{4}\int_0^1 \int_0^1 ({\bf y}_1 - {\bf y}_2)^T \left[\left({\bf C}(t)+ {\bf C}(t)^T\right) \left({\bf C}(s)-{\bf C}(s)^T\right)\right] ({\bf y}_1 - {\bf y}_2) ds dt.
		\end{flalign*}
		
	\noindent	Notice that 
	\[   \norm{ \left({\bf C}(t)+ {\bf C}(t)^T\right) \left({\bf C}(s)-{\bf C}(s)^T\right)}_2 
		\le \norm{{\bf C}(t)+ {\bf C}(t)^T}_2 \norm{{\bf C}(s)-{\bf C}(s)^T}_2
		\le   2M\delta.\]
		
	\noindent Therefore, \begin{flalign}\label{eqn:BH_part2}
		\langle\nabla \tilde{ f}({\bf y}_1) - \nabla \tilde{ f}({\bf y}_2), {\bf g}({\bf y}_1)- {\bf g}({\bf y}_2)\rangle
		\le \frac{1}{4} \int_0^1 \int_0^1 2M\delta \norm{{\bf y}_1 - {\bf y}_2}_2^2 ds dt
		= \frac{1}{2}M\delta \norm{{\bf y}_1 - {\bf y}_2}_2^2.
		\end{flalign}  
		
		\item[3.] 	Similarly, one has \begin{flalign*}
		\norm{{\bf g}({\bf y}_1)-{\bf g}({\bf y}_2)}_2^2 
		= \frac{1}{4}\int_0^1 \int_0^1  ({\bf y}_1 - {\bf y}_2)^T  \left[({\bf C}(t)^T - {\bf C}(t)) ({\bf C}(s)-{\bf C}(s)^T)\right]  ({\bf y}_1 - {\bf y}_2) ds dt,
		\end{flalign*}
		\noindent and  
		\begin{flalign*}
			\norm{({\bf C}(t)^T - {\bf C}(t)) ({\bf C}(s)-{\bf C}(s)^T)}_2 
			\le   \norm{{\bf C}(t)^T - {\bf C}(t)}_2 \norm{ {\bf C}(s)-{\bf C}(s)^T}_2
			\le  \delta^2.
		\end{flalign*}
		
		\noindent Thus, \begin{flalign}\label{eqn:BH_part3}
		\norm{{\bf g}({\bf y}_1)-{\bf g}({\bf y}_2)}_2^2  \le \frac{1}{8}\int_0^1 \int_0^1 2\delta^2 \norm{{\bf y}_1- {\bf y}_2}_2^2 ds dt = \frac{\delta^2}{4} \norm{{\bf y}_1 - {\bf y}_2}_2^2.
		\end{flalign}
	\end{enumerate}

\noindent	Combining equations~\eqref{eqn:BH_inequality}-\eqref{eqn:BH_part3}, one has
	\begin{flalign*}
	&\langle\nabla f\circ \nabla\phi^*({\bf y}_1) - \nabla f\circ \nabla\phi^*({\bf y}_2), {\bf y}_1 - {\bf y}_2\rangle \\
	= & \langle\nabla\tilde{f}({\bf y}_1) - \nabla\tilde{f}({\bf y}_2), {\bf y}_1 - {\bf y}_2\rangle + \langle {\bf g}({\bf y}_1) - {\bf g}({\bf y}_2), {\bf y}_1 - {\bf y}_2\rangle \\
	= & \langle\nabla\tilde{f}({\bf y}_1) - \nabla\tilde{f}({\bf y}_2), {\bf y}_1 - {\bf y}_2\rangle \\
	\ge & \frac{1}{m+M}\norm{\nabla \tilde{f}({\bf y}_1) - \nabla \tilde{f}({\bf y}_2)}_2^2 + \frac{mM}{m+M}\norm{{\bf y}_1 - {\bf y}_2}_2^2\\
	\ge  & \frac{1}{m+M}\norm{\nabla f\circ \nabla\phi^*({\bf y}_1) - \nabla f\circ \nabla\phi^*({\bf y}_2)}_2^2 - \frac{1}{m+M}\norm{ {\bf g}({\bf y}_1)- {\bf g}({\bf y}_2)}_2^2 \\
	& - \frac{2}{m+M}\langle\nabla \tilde{ f}({\bf y}_1) - \nabla \tilde{ f}({\bf y}_2), {\bf g}({\bf y}_1)- {\bf g}({\bf y}_2)\rangle + \frac{mM}{m+M}\norm{{\bf y}_1 - {\bf y}_2}_2^2\\
	\ge & \frac{1}{m+M}\norm{\nabla f\circ \nabla\phi^*({\bf y}_1) - \nabla f\circ \nabla\phi^*({\bf y}_2)}_2^2 + \frac{4mM -4M\delta - \delta^2}{4(m+M)}\norm{ {\bf y}_1- {\bf y}_2}_2^2. 
	\end{flalign*}
	By change of variables, this implies
	\begin{flalign}
	\begin{aligned}
	&\langle\nabla f({\bf x}_1) - \nabla f({\bf x}_2), \nabla\phi({\bf x}_1) - \nabla\phi({\bf x}_2)\rangle \\
	\ge & \frac{1}{m+M}\norm{\nabla f({\bf x}_1) - \nabla f({\bf x}_2)}_2^2 + \frac{4mM -4M\delta - \delta^2}{4(m+M)}\norm{\nabla\phi({\bf x}_1) - \nabla\phi({\bf x}_2)}_2^2. 
	\end{aligned}
	\end{flalign}
\end{proof}


\medskip

\section{Proof of Proposition \ref{cor:constant_step_size}, Corollary \ref{cor:contraction_ball}, and Proposition \ref{prop:main_proof_phi}}\label{app:proof_of_corollaries}

\medskip
\begin{proof}[Proof of Proposition \ref{cor:constant_step_size}]
	From Theorem \ref{thm:contractibility}, one has 
	\begin{flalign*}
	W_{2,\phi}(\mu_{k}, \pi) \le & \rho  W_{2,\phi}(\mu_{k-1}, \pi) +  hp^{\frac{1}{2}}\beta_1 + h^{\frac{3}{2}}p^{\frac{1}{2}}\beta_2 \\
	\le & \rho \cdot(\rho  W_{2,\phi}(\mu_{k-2}, \pi) + hp^{\frac{1}{2}}\beta_1 + h^{\frac{3}{2}}p^{\frac{1}{2}}\beta_2)  + hp^{\frac{1}{2}}\beta_1 + h^{\frac{3}{2}}p^{\frac{1}{2}}\beta_2\\
	\le & \cdots \\
	\le & \rho^{k} W_{2,\phi}(\mu_{0}, \pi) + (hp^{\frac{1}{2}}\beta_1 + h^{\frac{3}{2}}p^{\frac{1}{2}}\beta_2) (1 + \rho + \dots + \rho^{k-1}) \\
	= & \rho^{k}  W_{2,\phi}(\mu_{0}, \pi) + (hp^{\frac{1}{2}}\beta_1 + h^{\frac{3}{2}}p^{\frac{1}{2}}\beta_2)\cdot\frac{1-\rho^k}{1-\rho}\\
	< & \rho^{k}  W_{2,\phi}(\mu_{0}, \pi) + \frac{hp^{\frac{1}{2}}\beta_1 + h^{\frac{3}{2}}p^{\frac{1}{2}}\beta_2}{1-\rho}.
	\end{flalign*}
	The last inequality holds because $0<\rho <1$.
\end{proof}

\medskip

\begin{lemma}[{\citep[Lemma~1]{Chung1954}}]\label{lem:chung}
Let $\{w_k\}_{k\in\N}$ be a sequence of real numbers such that, for all $k$,
\begin{flalign}\label{eqn:chung_condition}
	w_{k+1} \leq \pa{1-\frac{c}{k}}w_{k} + \frac{c_1}{k^{s+1}},
\end{flalign}
where $c > s > 0$, $c_1 > 0$. Then for any $k$,
\begin{flalign}\label{eqn:chung_consequence}
	w_{k} \leq c_1(c-s)^{-1} k^{-s} + o(k^{-s}).
\end{flalign}
\end{lemma}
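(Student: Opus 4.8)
The plan is to establish \eqref{eqn:chung_consequence} by a discrete variation-of-constants (integrating-factor) argument. Since the conclusion is asymptotic, I may discard the finitely many small indices and fix an integer $k_1 > c$, so that $1-c/j>0$ for all $j \ge k_1$. Introduce the discrete exponential $P_k \eqdef \prod_{j=k_1}^{k-1}(1-c/j)$ for $k\ge k_1$ (with $P_{k_1}\eqdef 1$), which satisfies $P_k>0$ and $P_{k+1}=(1-c/k)P_k$. Dividing \eqref{eqn:chung_condition} by $P_{k+1}>0$ yields $w_{k+1}/P_{k+1} \le w_k/P_k + c_1 k^{-s-1}/P_{k+1}$, and telescoping from $k_1$ to $k-1$ gives
\[
\frac{w_k}{P_k} \;\le\; \frac{w_{k_1}}{P_{k_1}} \;+\; c_1\sum_{j=k_1}^{k-1}\frac{j^{-s-1}}{P_{j+1}}, \qquad k\ge k_1 ,
\]
with no sign restriction on the $w_k$, since $P_k$ and the factors $1-c/j$ are positive.

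The second step consists of two asymptotic evaluations. First, $P_k \sim C_\star k^{-c}$ for some constant $C_\star>0$: either write $P_k = \frac{\Gamma(k_1)}{\Gamma(k_1-c)}\frac{\Gamma(k-c)}{\Gamma(k)}$ and use $\Gamma(k-c)/\Gamma(k)\sim k^{-c}$, or expand $\log P_k = -c\sum_{j=k_1}^{k-1} j^{-1} - \sum_{j=k_1}^{k-1}\bigl(-\log(1-c/j)-c/j\bigr) = -c\log k + \mathrm{const} + o(1)$, the last sum converging because its terms are $O(j^{-2})$. Consequently $j^{-s-1}/P_{j+1} \sim C_\star^{-1} j^{c-s-1}$, and since the hypothesis $c>s$ makes the exponent $c-s-1>-1$, an elementary integral comparison gives $\sum_{j=k_1}^{k-1} j^{c-s-1} \sim k^{c-s}/(c-s)$, whence $\sum_{j=k_1}^{k-1} j^{-s-1}/P_{j+1} \sim k^{c-s}/\bigl(C_\star(c-s)\bigr)$. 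Plugging this into the telescoped inequality and multiplying through by $P_k\sim C_\star k^{-c}$, the constant $C_\star$ cancels and one obtains $w_k \le O(k^{-c}) + \frac{c_1}{c-s}k^{-s}(1+o(1))$; since $c>s$ forces $k^{-c}=o(k^{-s})$, this is precisely $w_k \le \frac{c_1}{c-s}k^{-s}+o(k^{-s})$.

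The main obstacle is not the $O(k^{-s})$ rate but the sharp leading constant $c_1/(c-s)$: this requires carrying the multiplicative constant $C_\star$ in $P_k\sim C_\star k^{-c}$ all the way to the last line, where it must cancel exactly against the $C_\star^{-1}$ coming out of the sum, and it is here that the hypothesis $c>s$ gets used twice — once so that $\sum j^{c-s-1}$ is of order $k^{c-s}$ with constant $1/(c-s)$, and once so that the residual $O(k^{-c})$ is absorbed into $o(k^{-s})$. An alternative that sidesteps the precise constant in $P_k$ is a barrier argument: one checks that the profile $Ak^{-s}$ with $A\ge c_1/(c-s)$ is propagated by \eqref{eqn:chung_condition} for large $k$ (using $(k+1)^{-s}\ge k^{-s}(1-s/k)$, a consequence of the convexity of $t\mapsto t^{-s}$), then uses a crude a priori bound to see that $k^s w_k$ stays bounded and must fall below any such $A$, and finally lets $A\downarrow c_1/(c-s)$; this works but its bookkeeping near the threshold is more delicate.
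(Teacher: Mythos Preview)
Your argument is correct: the integrating-factor telescoping together with the Gamma-ratio asymptotics $P_k\sim C_\star k^{-c}$ and the integral comparison $\sum_{j\le k} j^{c-s-1}\sim k^{c-s}/(c-s)$ (valid precisely because $c>s$) yields the sharp leading constant $c_1/(c-s)$, and the residual $O(k^{-c})$ is indeed $o(k^{-s})$.

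As for the comparison: the paper does not prove this lemma. It is quoted verbatim as \cite[Lemma~1]{Chung1954} and used as a black box in the proof of Corollary~\ref{cor:contraction_ball}(i). So there is no ``paper's own proof'' to compare against; your write-up simply supplies what the paper outsources. For what it is worth, Chung's original 1954 argument is essentially the barrier approach you sketch at the end (showing that $\limsup k^s w_k \le c_1/(c-s)$ by contradiction), whereas your primary route via the explicit product $P_k$ is the cleaner modern variant and makes the cancellation of $C_\star$ transparent.
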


\begin{remark}\label{rmk:Chung'sLemma}
	The same consequence~\eqref{eqn:chung_consequence} holds if $c_1$ is replaced by $c_1 + o(1)$.
\end{remark}

\medskip

\begin{proof}[Proof of Corollary \ref{cor:contraction_ball}]
	1. For any $0<b_1<\frac{2m-\tilde{ \kappa}^2}{2}$, there exists $a_1>0$  
	such that $h_k=\frac{a_1}{k}$ is small enough and
	\[
	\rho_{k} \le 1-b_1h_k
	\]
	for all $k \in \N$. 
	Thus, from Theorem~\ref{thm:contractibility}, we get
	\begin{align}\label{eq:Wineq}
	\begin{aligned}
	W_{2,\phi}(\mu_{k+1}, \pi) 
	&\leq \rho_{k+1}W_{2,\phi}(\mu_{k}, \pi) + \beta_2p^{1/2}h_{k+1}^{3/2} + \beta_1p^{1/2}h_{k+1} \\
	&\leq (1-b_1h_{k+1})W_{2,\phi}(\mu_{k}, \pi) + p^{1/2}(\beta_1 + o(1))h_{k+1} .
	\end{aligned}
	\end{align}
	For any $0< s < a_1b_1$, set $w_k \eqdef h_{k+1}^sW_{2,\phi}(\mu_{k}, \pi)$. Multiplying both sides of \eqref{eq:Wineq} by $h_{k+2}^s$, and using the fact that $\{h_k\}_{k\in\N}$ is a decreasing sequence, we get
	\begin{align}\label{eq:Wineq2}
	\begin{aligned}
	w_{k+1} 
	&\leq \pa{1-\frac{a_1b_1}{k+1}}w_{k} + \frac{a_1^{s+1}p^{1/2}(\beta_1 + o(1))}{(k+1)^{s+1}} .
	\end{aligned}
	\end{align}
	Applying Lemma~\ref{lem:chung} with its Remark~\ref{rmk:Chung'sLemma}, we have
	\[
	w_{k} \leq a_1^{s+1}p^{1/2}(\beta_1 + o(1))(a_1b_1-s)^{-1} (k+1)^{-s} + o((k+1)^{-s}) .
	\]
	From the definition of $w_k$, we deduce that
	\[
	W_{2,\phi}(\mu_{k}, \pi) \leq a_1 p^{1/2}(\beta_1 + o(1))(a_1b_1-s)^{-1} + o(1) = a_1 p^{1/2}\beta_1(a_1b_1-s)^{-1} + o(1) .
	\]
	In turn, we conclude that
	\begin{flalign*}
	\limsup\limits_{k \rightarrow \infty} W_{2,\phi}(\mu_{k}, \pi)  \le a_1 p^{1/2}\beta_1(a_1b_1-s)^{-1},
	\end{flalign*}
	for any $0<s<a_1b_1$. Taking the limit at both sides when $s\rightarrow 0$,  one has 
	\begin{flalign}
		\limsup\limits_{k \rightarrow \infty} W_{2,\phi}(\mu_{k}, \pi) \le p^{1/2}\beta_1 b_1^{-1}.
	\end{flalign} 
	This implies that $W_{2,\phi}(\mu_{k}, \pi)h^2_{k+1}$ has the order $o(h_{k+1})$ whenever $h_k=\frac{a}{k}$ for $a \in (0, a_1]$. 
	
	Now let $b = \frac{2m-\tilde{ \kappa}^2}{2}$. There exists $a \in (0, a_1]$ such that $h_k=\frac{a}{k}$ is small enough and
	\[
	\rho_{k} \le 1-bh_k + \frac{m^2}{2} h_k^2
	\]
	for all $k \in \N$. Theorem~\ref{thm:contractibility} then implies
	\begin{flalign}
		\begin{aligned}
		W_{2,\phi}(\mu_{k+1}, \pi) 
		&\leq \left(1-bh_{k+1} + \frac{m^2}{2} h_{k+1}^2\right)W_{2,\phi}(\mu_{k}, \pi) + \beta_2p^{1/2}h_{k+1}^{3/2} + \beta_1p^{1/2}h_{k+1} \\
		&\leq (1-bh_{k+1})W_{2,\phi}(\mu_{k}, \pi) +  p^{1/2}(\beta_1 + o(1))h_{k+1} .
		\end{aligned}
	\end{flalign}
	Repeating the above argument by using Remark~\ref{rmk:Chung'sLemma} gives $\limsup\limits_{k \rightarrow \infty} W_{2,\phi}(\mu_{k}, \pi) \le p^{1/2}\beta_1 b^{-1} = r_0$ as claimed.	
	\QEDB
	\bigskip
	
	2. Define a function $r: [0, \infty) \rightarrow \R$ such that $r(0) = r_0$ and for all $t > 0$,
	\begin{flalign}
	r(t): = \frac{t\alpha_1 + t^{\frac{3}{2}}\alpha_2}{1-\sqrt{(1-mt)^2+\tilde{ \kappa}^2t}}.
	\end{flalign} 
	One can check that its derivative $r'(t)>0$ for all $0<t<\min\left(\frac{2}{m+M}, \frac{2m-\tilde{ \kappa}^2}{m^2}\right)$ and $\lim\limits_{t\rightarrow 0^{+}}r(t) = r_0$. If $\mu_k \notin \overline{\mathcal{B}}_{r_0}(\pi)$, i.e., $W_{2,\phi}(\mu_{k}, \pi)  > r_0$, by the continuity of $r$ at $0$, there exists $0<h_{k+1} < \min\left(\frac{2m-\tilde{ \kappa}^2}{m^2},\frac{2M-\tilde{ \kappa}^2}{M^2}, \frac{2}{m+M}\right)$ such that $W_{2,\phi}(\mu_{k}, \pi) > r(h_{k+1}) = \frac{h_{k+1}\alpha_1+ h_{k+1}^{\frac{3}{2}}\alpha_2}{1-\rho_{k+1}}$. For the $\mu_{k+1}$ obtained from the algorithm~\eqref{discrete_X}, by 
	Theorem \ref{thm:contractibility}, we know
	\begin{flalign}
	\begin{aligned}
	W_{2,\phi}(\mu_{k+1}, \pi) \le 
	&\rho_{k+1}  W_{2,\phi}(\mu_{k}, \pi)  
	+ h_{k+1} \alpha_1 + h_{k+1}^{\frac{3}{2}} \alpha_2 \\
	< &\rho_{k+1} W_{2,\phi}(\mu_{k}, \pi)  + (1-\rho_{k+1})W_{2,\phi}(\mu_{k}, \pi) \\
	= & W_{2,\phi}(\mu_{k}, \pi). 
	\end{aligned}
	\end{flalign}
	That is, the distance is strictly decreasing.\QEDB
	\bigskip
	
	3. If  $\mu_k \in \mathcal{B}_{r_0}(\pi)$, the function 
	\[\sqrt{(1-mt)^2+\tilde{ \kappa}^2t}\left(W_{2,\phi}(\mu_{k}, \pi)  - r_0\right) + t \alpha_1 + t^{\frac{3}{2}}\alpha_2\]
	is continuous in $t$ and negative at $t=0$. Thus there exists \[0<h_{k+1} < \min\left(\frac{2m-\tilde{ \kappa}^2}{m^2},\frac{2M-\tilde{ \kappa}^2}{M^2}, \frac{2}{m+M}\right)\] 
	such that \(\rho_{k+1}\left(W_{2,\phi}(\mu_{k}, \pi)  - r_0\right) + h_{k+1} \alpha_1 + h_{k+1}^{\frac{3}{2}}\alpha_2 <0\). Therefore, by 
	Theorem \ref{thm:contractibility}, we know
	\begin{flalign}
	W_{2,\phi}(\mu_{k+1}, \pi) 
	\le \rho_{k+1}  W_{2,\phi}(\mu_{k}, \pi) + h_{k+1} \alpha_1 + h_{k+1}^{\frac{3}{2}} \alpha_2 	< \rho_{k+1} r_0 < r_0.
	\end{flalign}
	That is, $\mu_{k+1} \in \mathcal{B}_{r_0}(\pi)$.
	\QEDB
	\bigskip
	
	4. Suppose $W_{2,\phi}(\mu_{k}, \pi)  = r_0$. For any $r> r_0$, there exists \[0<h_{k+1} < \min\left(\frac{2m-\tilde{ \kappa}^2}{m^2},\frac{2M-\tilde{ \kappa}^2}{M^2}, \frac{2}{m+M}\right)\] 
	such that $r > r(h_{k+1}) = \frac{h_{k+1}\alpha_1+ h_{k+1}^{\frac{3}{2}}\alpha_2}{1-\rho_{k+1}}$. Therefore, by 
	Theorem \ref{thm:contractibility}, we know
	\begin{flalign}
	W_{2,\phi}(\mu_{k+1}, \pi) 
	\le \rho_{k+1}  W_{2,\phi}(\mu_{k}, \pi) + h_{k+1} \alpha_1 + h_{k+1}^{\frac{3}{2}} \alpha_2 	< \rho_{k+1} r_0 + (1-\rho_{k+1}) r < r.
	\end{flalign}
	That is, $\mu_{k+1} \in \mathcal{B}_{r}(\pi)$.
\end{proof}

\medskip

The following lemma comes from \citep[Theorem~7.4.1.4]{HornJohnson12}.

\begin{lemma}\label{lemma:symmetric_trace}
	For any symmetric matrix ${\bf M}$ with rank $p$, we have ${\rm Tr}({\bf M}) \le p \norm{{\bf M}}_2$.
\end{lemma}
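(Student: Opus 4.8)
The plan is to diagonalise ${\bf M}$ and then reduce the inequality to the elementary fact that a sum of $p$ real numbers does not exceed $p$ times the largest of their absolute values.

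First I would invoke the spectral theorem for real symmetric matrices: ${\bf M}$ is orthogonally similar to a real diagonal matrix, and since ${\rm rank}({\bf M}) = p$ it has exactly $p$ nonzero eigenvalues $\lambda_1, \dots, \lambda_p$, counted with multiplicity, the remaining eigenvalues all being zero. As the trace is invariant under orthogonal conjugation, ${\rm Tr}({\bf M}) = \sum_{i=1}^p \lambda_i$. Next I would identify the spectral norm with the largest eigenvalue in modulus: since ${\bf M}^T{\bf M} = {\bf M}^2$ is orthogonally similar to the diagonal matrix whose entries are the $\lambda_i^2$, one gets $\lambda_{\max}({\bf M}^T{\bf M}) = \max_{1 \le i \le p} \lambda_i^2$, hence $\norm{{\bf M}}_2 = \sqrt{\lambda_{\max}({\bf M}^T{\bf M})} = \max_{1 \le i \le p} |\lambda_i|$.

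Combining the two facts, ${\rm Tr}({\bf M}) = \sum_{i=1}^p \lambda_i \le \sum_{i=1}^p |\lambda_i| \le p\,\max_{1 \le i \le p} |\lambda_i| = p\,\norm{{\bf M}}_2$, which is exactly the assertion. There is no real obstacle here: the entire content is the spectral theorem, and one may equivalently just quote \citep[Theorem~7.4.1.4]{HornJohnson12}, as the paper does. Should a diagonalisation-free argument be preferred, it also suffices to restrict ${\bf M}$ to $(\ker {\bf M})^\perp$, which is ${\bf M}$-invariant of dimension $p$, pick an orthonormal basis ${\bf e}_1, \dots, {\bf e}_p$ of it, and write ${\rm Tr}({\bf M}) = \sum_{i=1}^p {\bf e}_i^T {\bf M} {\bf e}_i \le \sum_{i=1}^p \norm{{\bf M}{\bf e}_i}_2 \le p\,\norm{{\bf M}}_2$, using the characterisation $\norm{{\bf M}}_2 = \max_{\norm{{\bf v}}_2 = 1}\norm{{\bf M}{\bf v}}_2$ recalled earlier in the text.
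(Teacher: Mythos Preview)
Your argument is correct: the paper does not supply its own proof but simply cites \citep[Theorem~7.4.1.4]{HornJohnson12}, and your spectral-theorem computation is exactly the standard justification of that fact. There is nothing to compare---you have filled in precisely the detail the paper omits.
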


\medskip

The remark below follows clearly from the definition of the spectral norm. 
\begin{remark}\label{lemma:spectral_norm_symmetric_matrix}
	If ${\bf M}$ is  a symmetric matrix, then $\norm{{\bf M}}_2 = \lambda_{\max}({\bf M})$. 
\end{remark}

\medskip

\begin{proof}[Proof of Proposition \ref{prop:main_proof_phi}]
		Firstly, we want to show
		\begin{flalign}\label{eqn:R_1}
		\E_{{\bf L}\sim \pi}\left[ \norm{\nabla f({\bf L}) }_2^2 \right] = \E_{{\bf L}\sim \pi}\left[ {\rm Tr} (D^2f({\bf L})) \right] \le p \cdot \E_{{\bf L}\sim \pi}\left[ \norm{D^2f({\bf L})}_2 \right]\le MpR.
		\end{flalign}
		
		For the equality in~\eqref{eqn:R_1}, from integration by parts, we have
		\begin{flalign*}
		&\E_{{\bf L}\sim \pi}\left[ \norm{\nabla f({\bf L}) }_2^2 \right] \\
		= & \int_{\mathcal{X}}\langle \nabla f({\bf x}), \nabla f({\bf x})\rangle \cdot \frac{d\pi}{d {\bf x}}({\bf x})d{\bf x} \\
		= & - \int_{\mathcal{X}} \left\langle \nabla f({\bf x}), \nabla\left( \frac{d\pi}{d {\bf x}}\right)({\bf x})\right\rangle d{\bf x}\\
		= & - \int_{\partial \mathcal{X}} \frac{d\pi}{d {\bf x}}({\bf x}) \langle\nabla f({\bf x}), {\bf n}\rangle d\mathcal{H}^{p-1}({\bf x}) + \int_{\mathcal{X}} \frac{d\pi}{d {\bf x}}({\bf x}) \Delta f({\bf x}) d{\bf x}\\
		= & \int_{\partial \mathcal{X}} \left\langle\nabla \left(\frac{d\pi}{d {\bf x}}\right)({\bf x}), {\bf n}\right\rangle d\mathcal{H}^{p-1}({\bf x}) + \E_{{\bf L}\sim \pi}\left[ {\rm Tr} (D^2f({\bf L})) \right]\\
		= & \E_{{\bf L}\sim \pi}\left[ {\rm Tr} (D^2f({\bf L})) \right].
		\end{flalign*}

		The first inequality in~\eqref{eqn:R_1} can be derived using Lemma  \ref{lemma:symmetric_trace}
		when ${\bf M} = D^2 f({\bf x})$.

		For the last inequality in~\eqref{eqn:R_1}, one only need to show $\norm{D^2 f({\bf x})}_2 \le M \norm{D^2 \phi({\bf x})}_2$ for all ${\bf x} \in \mathcal{X}$. This can be derived from assumption \eqref{assumption:A4}, as shown in Appendix \ref{app:assumptions}.
		
		Secondly, since $\norm{{\bf M}}_F \le \sqrt{p}\norm{{\bf M}}_2$ holds for any matrix ${\bf M}$ with rank $p$, one has 
		\begin{flalign*}
		2\norm{\left[D^2\phi({\bf x})\right]^{\frac{1}{2}}}_F^2 \le 2p\norm{\left[D^2\phi({\bf x})\right]^{\frac{1}{2}}}_2^2 = 2p \cdot \lambda_{\max}(D^2\phi({\bf x})) =2 p \norm{D^2\phi({\bf x})}_2,
		\end{flalign*}
		for every \( {\bf x} \in \mathcal{X}\).
		Here the last equality comes from Remark \ref{lemma:spectral_norm_symmetric_matrix}. Thus, integrating at both sides against measure $\pi$ gives
		\begin{flalign}\label{eqn:R_2}
		\E_{{\bf L}\sim \pi}\left[    \norm{ \sqrt{2}[D^2\phi({\bf L})]^{\frac{1}{2}}}_F^2  \right] \le  2pR.
		\end{flalign}
		
		Lastly, 
		\begin{flalign}
		&\sqrt{\E \left[\norm{\nabla \phi({\bf L}_0) - \nabla \phi ({\bf L}_s)}_2^2\right]}\\
		= & \sqrt{\E \left[\norm{		\int_0^s\nabla f({\bf L}_r) dr - \sqrt{2} \int_0^s[D^2\phi({\bf L}_r)]^{\frac{1}{2}} d{\bf B}_r	}_2^2\right]}\\
		\label{eqn:main_proof_phi_1}	
		\le & 	\sqrt{\E\left[ \norm{\int_0^s\nabla f({\bf L}_r) dr}_2^2 \right]}	+ \sqrt{\E\left[ \norm{  \int_0^s \sqrt{2} [D^2\phi({\bf L}_r)]^{\frac{1}{2}} d{\bf B}_r}_2^2 \right]} \\
		\label{eqn:main_proof_phi_2}	
		= & 	\sqrt{\E\left[ \norm{\int_0^s\nabla f({\bf L}_r) dr}_2^2 \right]}	+ \sqrt{\int_0^s \E\left[    \norm{ \sqrt{2}[D^2\phi({\bf L}_r)]^{\frac{1}{2}}}_F^2  \right] dr } \\
		\label{eqn:main_proof_phi_3}	
		\le &  	\int_0^s \sqrt{\E\left[ \norm{\nabla f({\bf L}_r) }_2^2 \right]} dr	+ \sqrt{\int_0^s \E\left[    \norm{ \sqrt{2}[D^2\phi({\bf L}_r)]^{\frac{1}{2}}}_F^2  \right] dr }\\
		\label{eqn:main_proof_phi_4}
		= &  	\int_0^s \sqrt{\E\left[ \norm{\nabla f({\bf L}_0) }_2^2 \right]} dr	+ \sqrt{\int_0^s \E\left[    \norm{ \sqrt{2}[D^2\phi({\bf L}_0)]^{\frac{1}{2}}}_F^2  \right] dr } \\
		= &  s\sqrt{\E\left[ \norm{\nabla f({\bf L}_0) }_2^2 \right]}	+ \sqrt{s \E\left[    \norm{ \sqrt{2}[D^2\phi({\bf L}_0)]^{\frac{1}{2}}}_F^2  \right]  }\\
		\label{eqn:main_proof_phi_5}	
		\le &  s\sqrt{MpR} + \sqrt{2spR}.
		\end{flalign}
		Here~\eqref{eqn:main_proof_phi_1} comes from the triangular inequality; ~\eqref{eqn:main_proof_phi_2} is derived from It\^{o}'s isometry;
		\eqref{eqn:main_proof_phi_3} is obtained from Minkowski's inequality;
		\eqref{eqn:main_proof_phi_4} comes from the fact that ${\bf L}_r \sim \pi$ for all $r\ge 0$;
		and~\eqref{eqn:main_proof_phi_5} is from~\eqref{eqn:R_1} and~\eqref{eqn:R_2}. 	
\end{proof}

\end{document}